\numberwithin{equation}{section}
\newtheorem{Definition}{Definition}[section]
\newtheorem{definition}[Definition]{Definition}
\newtheorem{theorem}[Definition]{Theorem}
\newtheorem{proposition}[Definition]{Proposition}
\newtheorem{corollary}[Definition]{Corollary}
\begin{document}
\title{\Large \bf Uniqueness of primary decompositions in Laskerian le-modules}
\author{ A. K. Bhuniya and M. Kumbhakar }
\date{}
\maketitle

\begin{center}
Department of Mathematics, Visva-Bharati, Santiniketan-731235, India. \\
anjankbhuniya@gmail.com, manaskumbhakar@gmail.com
\end{center}

\begin{abstract}{\footnotesize}
Here we introduce and characterize a new class of le-modules $_{R}M$ where  $R$ is a commutative ring with $1$ and $(M,+,\leqslant,e)$ is a lattice ordered semigroup with the greatest element $e$. Several notions are defined and uniqueness theorems for primary decompositions of a submodule element in a Laskerian le-module are established.
\end{abstract}

\textbf{Keywords:} Le-module, submodule element, prime, radical, primary decomposition.\\
\textbf{AMS Subject Classifications:} 13A99

\section{Introduction}
The observation by W. Krull \cite{Krull} that many results in the ideal theory of commutative rings \cite{Larsen-McCarthy} do not depend on the fact that the ideals are composed of elements, leads to the abstract ideal theory. The researchers chose the system, namely lattice ordered semigroups where an element represents an ideal of the ring as an undivided entity. There are numerous publications dealing with lattice ordered commutative semigroups \cite{Anderson 2, Anderson and Johnson, Fuchs and Reis} and multiplicative lattice \cite{Anderson, Dilworth, Majcherek, Ward and Dilworth 1} -- \cite{Ward and Dilworth 3}, generalizing commutative ideal theory.

What follows, there are numerous publications studying in abstract form the class of submodules of a module over a commutative ring. Researchers considered a complete lattice $M$ together with an action of a multiplicative lattice $L$ on $M$ similar to the action of a ring on an additive abelian group, as if $M$ is the class of all subsets of an $R$-module and $L$ is the class of all subsets of the ring $R$. Such a system $M$ is known as $L$-module or lattice module \cite{Al-Khouja,Johnson 1} -- \cite{Johnson and Johnson, Manjarekar and Bingi, Manjarekar-Kandale, Nakkar and Anderson, Whitman 2, Whitman 1}.

In this article we initiate to develop an ``abstract submodule theory'', analogous to the ``abstract ideal theory'', which would be capable to give insight about rings more directly. Since it is intended that the treatment should be purely submodule theoretic, the system we choose here is a lattice $M$ with a commutative and associative addition. We consider an action of a commutative ring $R$ with $1$ on $M$.

Here we call an le-module $_{R}M$ Laskerian if each submodule element of $M$ is a meet of primary elements of $M$. Two uniqueness theorems for primary decompositions of submodule elements of a Laskerian le-module are proved. Radical of a submodule element $n$ of $M$ is defined to be an ideal of the ring $R$, and, naturally, associated primes of $n$ are prime ideals of $R$. Thus we found direct ways of interaction of a ring $R$ with le-modules.

This article is organized as follows. In the first section we give our definition of an le-module and introduce the notion of submodule elements in le-modules which represent submodules of a module as an undivided entity. Also we prove a number of basic results characterizing submodule elements. Section 3 is devoted to characterize prime and primary submodule elements. We study primary decomposition of a submodule element and their uniqueness in Section 4.

\section{Definition and basic properties of le-modules}
Throughout the article, $\mathbb{N}$ denotes the set of all natural numbers and $R$ stands for a commutative ring with $1$. For the results on rings, modules we refer to \cite{Atiyah-MacDonald, Larsen-McCarthy, CPLu, McCasland and Moore, Smith, Swanson}.

An le-semigroup $(M,+,\leqslant,e)$ is a commutative monoid with the zero element $0_{M}$ at the same time a complete lattice with the greatest element $e$ that satisfies
\begin{enumerate}
\item[(S)] $m+(\vee_{i \in I}m_{i}) = \vee_{i \in I}(m+m_{i})$, for all $m, m_i \in M, i \in I$.
\end{enumerate}

\begin{definition}      \label{order relation}
Let $R$ be a ring and $(M,+,\leqslant)$ be an le-semigroup with the zero element $0_{M}$. Then $M$ is called an le-module over $R$ if there is a mapping $R\times M \longrightarrow M$ satisfying
\begin{enumerate}
\item[(M1)]  $r(m_{1}+m_{2})= rm_{1}+rm_{2}$
\item[(M2)] $(r_{1}+r_{2})m \leqslant r_{1}m+r_{2}m$
\item[(M3)] $(r_{1}r_{2})m=r_{1}(r_{2}m)$
\item[(M4)] $1_{R}.m=m; \;\;\; 0_{R}.m=r.0_{M}=0_{M}$
\item[(M5)] $r(\vee_{i \in I}m_{i}) = \vee_{i \in I}rm_{i}$, for all $r,r_{1},r_{2} \in R$ and $m,m_{1},m_{2},m_{i} \in M$ and $i \in I$.
\end{enumerate}
\end{definition}

We denote an le-module $M$ over $R$ by $_{R}M$. From the Definition \ref{order relation}(M5), we have,
\begin{enumerate}
\item[(M5)$^{\prime}$] $m_{1} \leqslant m_{2} \Rightarrow rm_{1} \leqslant rm_{2}$, for all $r \in R$ and $m_{1},m_{2} \in$ $M$.
\end{enumerate}

Let $_{R}M$ be an le-module. An element $n$ of $M$ is said to be a submodule element if $ n+n , rn \leqslant n$, for all $r \in R$. A submodule element $n$ is called proper if $n \neq e$. Observe that $ 0_{M}= 0_{R}.n \leqslant n$, for every submodule element $n$ of $M$. We denote $(-1).n = -n$. Readers should note that $-n$ is not additive inverse of $n$. Inspite of $0_{R}.m = 0_{M}$, it happens because $(r_{1}+r_{2})m$ and $r_{1}m+r_{2}m$ are not equal, in general. For every submodule element $n$ of $M$, we have $ n = -n $, and so $-n$ is also a submodule element of $M$. Also $ n+n = n $, i.e. every submodule element of $M$  is an idempotent.

Let $_{R}M$ be an le-module and $n$ be an element of $M$. If $A$ is an ideal of $R$, we define
\begin{center}
$ An = \vee \{ \sum_{i = 1}^{k} a_{i}n : k \in \mathbb{N}; a_{1},a_{2},\cdots,a_{k} \in A \}$
\end{center}

\begin{proposition}
Let $_{R}M$ be an le-module. If $A$ is an ideal of $R$ and $n$ is an element of $M$ then $An$ is a submodule element of $M$.
\end{proposition}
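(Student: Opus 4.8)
The plan is to set $S = \{\sum_{i=1}^{k} a_i n : k \in \mathbb{N},\ a_1,\dots,a_k \in A\}$, so that $An = \vee S$ by definition, and then to verify directly the two defining conditions of a submodule element, namely $An + An \leqslant An$ and $r(An) \leqslant An$ for every $r \in R$. The guiding idea is that $S$ is closed both under the addition of $M$ and under the action of any $r \in R$; the join-distributivity axioms (S) and (M5) should then propagate this closure from the elements of $S$ to their join $\vee S = An$.

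First I would record the two closure properties of $S$. For closure under addition, if $s = \sum_{i=1}^{k} a_i n$ and $s' = \sum_{j=1}^{l} b_j n$ lie in $S$, then, since $(M,+)$ is a commutative monoid, $s + s'$ is again a finite sum of terms of the form $cn$ with $c \in A$, so $s + s' \in S$. For closure under the $R$-action, given $r \in R$ and $s = \sum_{i=1}^{k} a_i n \in S$, a finite induction on (M1) gives $rs = \sum_{i=1}^{k} r(a_i n)$, and (M3) rewrites each summand as $r(a_i n) = (ra_i) n$; because $A$ is an ideal, $ra_i \in A$, and hence $rs = \sum_{i=1}^{k} (ra_i) n \in S$.

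Next I would transfer these to $An$. For the action condition, (M5) yields $r(An) = r(\vee S) = \vee\{rs : s \in S\}$; since every $rs \in S$ we have $rs \leqslant \vee S = An$, so the join on the right is bounded by $An$, giving $r(An) \leqslant An$. For the additive condition I would apply (S) twice: writing $a = \vee S$, axiom (S) gives $a + (\vee S) = \vee_{s' \in S}(a + s')$, and applying (S) again (using commutativity of $+$) to each term $a + s' = s' + \vee S = \vee_{s \in S}(s' + s)$ yields $An + An = \vee\{s + s' : s,s' \in S\}$. Since each $s + s' \in S$ by the closure just established, every such term is $\leqslant An$, whence $An + An \leqslant An$.

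The individual computations are routine. The step I expect to require the most care is the additive one, where (S) must be invoked twice to distribute the join over both summands of $(\vee S) + (\vee S)$, keeping track that (S) holds for an arbitrary join index set and that $+$ is commutative; once this double application of (S) is justified, closure of $S$ under addition closes out the argument.
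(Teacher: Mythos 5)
Your proposal is correct and follows essentially the same route as the paper: both arguments distribute the join over addition via axiom (S) and over the scalar action via (M5)/(M3), then observe that each resulting term is again a finite sum $\sum (c_i)n$ with $c_i \in A$ (using that $A$ is an ideal), hence bounded by $An$. The paper writes this with ellipsis notation rather than your explicit set $S$ and its closure properties, but the underlying argument is identical; yours is simply the more carefully formalized version.
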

\begin{proof}
$An+An = (a_{1}n \vee (a_{1}n+a_{2}n) \vee \cdots) + (b_{1}n \vee (b_{1}n+b_{2}n) \vee \cdots) = ((a_{1}n \vee (a_{1}n+a_{2}n) \vee \cdots)+b_{1}n) \vee ((a_{1}n \vee (a_{1}n+a_{2}n) \vee \cdots)+(b_{1}n+b_{2}n)) \vee \cdots = (a_{1}n+b_{1}n) \vee (a_{1}n+a_{2}n+b_{1}n) \vee \cdots \leqslant An$. Also $r.(An) = r.(a_{1}n \vee (a_{1}n+a_{2}n) \vee \cdots) = (ra_{1})n \vee ((ra_{1})n+(ra_{2})n) \vee \cdots \leqslant An$, since $A$ is an ideal of $R$. Thus $An$ is a submodule element of $_{R}M$.
\end{proof}

\begin{proposition}
Let $_{R}M$ be an le-module, $A$ and $B$ be two ideals of $R$, and $l$ and $n$ be two submodule elements of $M$. Then
\begin{enumerate}
\item[(i)]   $A(Bn) = (AB)n$;
\item[(ii)]  $An \leqslant n$;
\item[(iii)] $A \subseteq B \Rightarrow An \leqslant Bn$;
\item[(iv)]  $l \leqslant n \Rightarrow Al \leqslant Bn$;
\item[(v)]   $A(l+n) = Al+An$;
\item[(vi)]  $(A+B)n = An+Bn$;
\item[(vii)] $A(l \wedge n) \leqslant Al \wedge An$;
\item[(viii)]$(A \cap B)n \leqslant An \wedge Bn$.
\end{enumerate}
\end{proposition}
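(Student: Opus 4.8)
The plan is to first record a handful of order-theoretic facts and then dispatch the eight items in increasing order of difficulty, saving (i) for last as the genuine obstacle.

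First I would establish the ambient monotonicity properties. From axiom (S), if $x \leqslant y$ then $m + y = m + (x \vee y) = (m+x)\vee(m+y) \geqslant m+x$, so $+$ is monotone in each argument. From (M5)$'$ together with the definition of $An$ one gets monotonicity of the action in the module variable: $l \leqslant n$ forces $\sum_i a_i l \leqslant \sum_i a_i n \leqslant An$, hence $Al \leqslant An$. I would then isolate the single fact that does all the real work, namely \emph{absorption into an idempotent}: if $c$ is a submodule element (so $c+c=c$) and $x_1,\dots,x_k \leqslant c$, then by monotonicity of $+$ the sum is $\leqslant c + \cdots + c = c$. Finally, since every submodule element satisfies $0_{M} \leqslant n$, monotonicity of $+$ gives $l \leqslant l+n$ and $n \leqslant l+n$.

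With these in hand the monotone items are immediate. For (ii), every generating sum $\sum_i a_i n$ has all summands $\leqslant n$, so absorption into the idempotent $n$ yields $\sum_i a_i n \leqslant n$, and taking the join gives $An \leqslant n$. For (iii), $A \subseteq B$ makes the family of generating sums of $An$ a subfamily of that of $Bn$, whence $An \leqslant Bn$; then (iv), which I read with the standing $A \subseteq B$, follows by chaining the module-monotonicity $l \leqslant n \Rightarrow Al \leqslant An$ with (iii) to reach $Al \leqslant An \leqslant Bn$. For (vii) and (viii) I would apply monotonicity in the appropriate variable to the inclusions $l \wedge n \leqslant l,n$ and $A \cap B \subseteq A,B$ and then take the meet on the right.

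For the additive identities (v) and (vi) I would prove two inequalities. The forward direction is termwise: in (v), $a_i(l+n) = a_i l + a_i n$ by (M1), so each generating sum of $A(l+n)$ is $\leqslant Al+An$; in (vi), $(a_i+b_i)n \leqslant a_i n + b_i n$ by (M2), so each generating sum of $(A+B)n$ is $\leqslant An+Bn$. The reverse direction is where absorption is essential: $A(l+n)$, being of the form $An'$, is a submodule element and hence idempotent, and the preliminary inequalities give $Al \leqslant A(l+n)$ and $An \leqslant A(l+n)$; therefore $Al+An \leqslant A(l+n)+A(l+n) = A(l+n)$. The same idempotent-absorption argument together with (iii) gives $An+Bn \leqslant (A+B)n + (A+B)n = (A+B)n$ for (vi).

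The main obstacle is (i), $A(Bn) = (AB)n$, because the product ideal $AB$ consists of \emph{sums} $\sum_l a_l b_l$, so the subadditivity (M2) — only an inequality — must be invoked and checked to point the right way. For $A(Bn) \leqslant (AB)n$ I would expand $a(Bn) = a\bigl(\vee\{\sum_j b_j n\}\bigr) = \vee\{\sum_j (ab_j)n\}$ using (M5), (M1) and (M3); since each $ab_j \in AB$ this join is $\leqslant (AB)n$, and absorption into the idempotent $(AB)n$ handles the outer sum over $i$. For $(AB)n \leqslant A(Bn)$ I would take a generator $c = \sum_l a_l b_l \in AB$, use (M2) to get $cn \leqslant \sum_l (a_l b_l)n = \sum_l a_l(b_l n)$ via (M3), note $b_l n \leqslant Bn$ so $a_l(b_l n) \leqslant a_l(Bn) \leqslant A(Bn)$ by module-monotonicity, and then absorb the finitely many cross terms into the idempotent $A(Bn)$; taking joins gives equality. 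The only delicate points are maintaining the direction of (M2) and repeatedly collapsing finite sums via idempotency, and I would be careful that no step secretly assumes $0_{M}$ to be the least element of $M$.
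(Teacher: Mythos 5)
Your proposal is correct and follows essentially the same route as the paper: the same two-sided inequalities for (i), (v), (vi) via (M1)--(M3) and (M5) with absorption of finite sums into idempotent submodule elements, and the same monotonicity arguments for the remaining items (including the sensible reading of (iv) with $A \subseteq B$, which matches how the paper uses it). The only difference is presentational -- you isolate as explicit lemmas the monotonicity of $+$ and the absorption principle that the paper invokes silently.
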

\begin{proof}
(i)
We have \begin{center}
$ An = \vee \{ \sum_{i = 1}^{k} a_{i}n : k \in \mathbb{N}; a_{1},a_{2},\cdots,a_{k} \in A \}$
\end{center}
For any $a_{1} \in A$, $a_{1}(Bn) = a_{1}(b_{1}n \vee (b_{1}n+b_{2}n) \vee \cdots ) = (a_{1}b_{1})n \vee ((a_{1}b_{1})n+(a_{1}b_{2})n) \vee \cdots \leqslant (AB)n$. For any $a_{1}, a_{2} \in A$, $a_{1}(Bn)+a_{2}(Bn) \leqslant (AB)n+(AB)n \leqslant (AB)n$, since $(AB)n$ is a submodule element of $M$. Thus $A(Bn) = a_{1}(Bn) \vee (a_{1}(Bn)+a_{2}(Bn)) \vee \cdots \leqslant (AB)n \vee (AB)n \vee \cdots = (AB)n$. Therefore $A(Bn) \leqslant (AB)n$. Also $(AB)n = (\sum a_{i}b_{i})n \vee ((\sum c_{i}d_{i})n+(\sum e_{i}f_{i})n) \vee \cdots \leqslant \sum a_{i}(b_{i}n) \vee (\sum c_{i}(d_{i}n)+\sum e_{i}(f_{i}n)) \vee \cdots \leqslant A(Bn) \vee (A(Bn)+A(Bn)) \vee \cdots = A(Bn)$. Thus $(AB)n = A(Bn)$.\\
(ii) For every $k \in \mathbb{N}$ and $a_{1},a_{2}, \cdots, a_{k} \in A$; $a_{1}n+a_{2}n+\cdots+a_{k}n \leqslant n+n+\cdots+n = n$ implies that $An \leqslant n$.\\
(iii) Trivial.\\
(iv) Trivial.\\
(v) From (iv) we have, $Al \leqslant A(l+n)$ and $An \leqslant A(l+n)$. Thus $Al+An \leqslant A(l+n)$, since $A(l+n)$ is a submodule element of $M$. Also for every $k \in \mathbb{N}$ and $a_{1},a_{2}, \cdots, a_{k} \in A$; $a_{1}(l+n)+a_{2}(l+n)+\cdots+a_{k}(l+n) = (a_{1}l+a_{2}l+\cdots+a_{k}l)+(a_{1}n+a_{2}n+\cdots+a_{k}n) \leqslant Al+An$ which implies that $A(l+n) \leqslant Al+An$. Therefore $Al+An = A(l+n)$.\\
(vi) Similar to (v).\\
(vii)Since $l \wedge n \leqslant l$ and $l \wedge n \leqslant n$, so $A(l \wedge n) \leqslant Al$ and $A(l \wedge n) \leqslant An$. Thus $A(l \wedge n) \leqslant Al \wedge An$.\\
(viii) $A \cap B \subseteq A$ and $A \cap B \subseteq B$ together implies that $(A \cap B)n \leqslant An$ and $(A \cap B)n \leqslant Bn$. Thus $(A \cap B)n \leqslant An \wedge Bn$.
\end{proof}

Let $_{R}M$ be an le-module and $n$ be a submodule element of $M$. For any element $r$ of $R$ we set,
\begin{center}
$(n:r) = \vee \{ x \in M : rx \leqslant n \}$.
\end{center}
Then $(n:r)$ is a submodule element of $M$. One can check that $n \leqslant (n:r)$ and $r(n:r) \leqslant n$. For any submodule element of $n$ of $M$ and ideal $A$ of $R$ we set,
\begin{center}
$(n:A) = \vee \{ x \in M: Ax \leqslant n\}$
\end{center}
which is also a submodule element of $M$. Immediately we have following propositions:

\begin{proposition}
Let $_{R}M$ be an le-module, $n$ be a submodule element of $M$ and $A$ be an ideal of $R$. Then
\begin{enumerate}      \label{A(n:A)}
\item[(i)] $n \leqslant (n:A)$,
\item[(ii)] $A(n:A) \leqslant n$.
\end{enumerate}
\end{proposition}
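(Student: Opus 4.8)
The plan is to read both inequalities directly off the definition $(n:A) = \vee\{x \in M : Ax \leqslant n\}$, using the monotonicity and distributivity facts already recorded for the action of an ideal on an element, together with the fact that a submodule element is idempotent under $+$.

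For part (i), I would simply observe that $n$ itself belongs to the indexing set $\{x \in M : Ax \leqslant n\}$. Indeed, by part (ii) of the preceding proposition we have $An \leqslant n$, so $n$ qualifies as one of the $x$'s. Since $(n:A)$ is by definition the join of that set, it dominates each of its members; in particular $n \leqslant (n:A)$. This step is immediate and carries no real difficulty.

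For part (ii), I would write $p = (n:A) = \vee\{x \in M : Ax \leqslant n\}$ and unfold $Ap$ through its definition, namely $Ap = \vee\{\sum_{i=1}^{k} a_{i}p : k \in \mathbb{N}, \ a_{1},\dots,a_{k} \in A\}$. The key step is to bound a single term $a_{i}p$. Using axiom (M5), $a_{i}p = a_{i}(\vee_{x} x) = \vee_{x}(a_{i}x)$, and for each $x$ in the set we have $a_{i}x \leqslant Ax \leqslant n$, because $a_{i}x$ is precisely one of the summands (with $k=1$) defining $Ax$. Hence $a_{i}p \leqslant n$ for every $a_{i} \in A$. Consequently each finite sum satisfies $\sum_{i=1}^{k} a_{i}p \leqslant n + n + \cdots + n = n$, where the final equality is the idempotency $n+n=n$ of the submodule element $n$. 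Taking the join over all such finite sums then gives $Ap \leqslant n$, i.e. $A(n:A) \leqslant n$.

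The only place requiring care — the ``main obstacle,'' though it is a mild one — is the interchange of the action of $a_{i}$ with the join defining $p$, and then the correct collapse of the finite sum of copies of $n$ back down to $n$. Both of these are licensed by (M5) and by idempotency of submodule elements respectively, so no genuinely hard estimate is involved; the whole argument is a careful bookkeeping exercise on the definition.
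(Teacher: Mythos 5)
Your proof is correct and follows essentially the same route as the paper: part (i) from $An \leqslant n$ placing $n$ in the defining set of the join, and part (ii) by distributing each $a_i$ over the join via (M5), bounding $a_i x \leqslant Ax \leqslant n$, and collapsing the finite sums using $n+n=n$. The only detail left tacit in both arguments is that summing the inequalities $a_i(n{:}A) \leqslant n$ termwise uses monotonicity of $+$, which follows from axiom (S), so nothing is missing.
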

\begin{proof}
(i) We have $An \leqslant n$, and hence $n \leqslant (n:A)$.\\
(ii) Denote $X = \{x \in M : Ax \leqslant n\}$. Then for any $a \in A$, $a(n:A) = a(\vee_{x \in X} x) = \vee_{x \in X}(ax)$ and $ax \leqslant Ax \leqslant n$ implies that $a(n:A) \leqslant n$. Thus for every $k \in \mathbb{N}$ and $a_{1},a_{2}, \cdots, a_{k} \in A$, $a_{1}(n:A)+a_{2}(n:A)+\cdots+a_{k}(n:A) \leqslant n$ and hence $A(n:A) \leqslant n$.
\end{proof}
We omit the proof of the following useful observation, since it follows directly from definition.
\begin{proposition}    \label{Ax lessequal n}
Let $_{R}M$ be an le-module, $n$ be a submodule element of $M$ and $A$ be an ideal of $R$. Then for every $x \in M$ and $r \in R$,
\begin{enumerate}
\item[(i)] $rx \leqslant n $ if and only if $x \leqslant (n:r)$.
\item[(ii)] $Ax \leqslant n $ if and only if $x \leqslant (n:A)$.
\end{enumerate}
\end{proposition}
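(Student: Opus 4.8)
The plan is to read both biconditionals as adjunction-type statements relating the action to the residuation operations, and to prove each implication separately. In each case one direction will be immediate from the fact that $(n:r)$ and $(n:A)$ are defined as joins over explicit sets, while the other direction will follow from a single application of order-preservation of the action together with the companion inequalities $r(n:r)\leqslant n$ and $A(n:A)\leqslant n$ that are already in hand.

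For part (i), I would first treat the forward implication. Assuming $rx\leqslant n$, the element $x$ belongs to the set $\{\,y\in M:ry\leqslant n\,\}$ whose join is $(n:r)$; hence $x\leqslant(n:r)$ directly from the definition of the supremum. For the converse, assuming $x\leqslant(n:r)$, I would apply the monotonicity property (M5)$^{\prime}$ to get $rx\leqslant r(n:r)$, and then combine this with the inequality $r(n:r)\leqslant n$ recorded immediately after the definition of $(n:r)$, so that $rx\leqslant n$ by transitivity.

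Part (ii) proceeds along exactly the same lines with the ideal action replacing the single-element action. The forward implication is again just membership in the defining set: if $Ax\leqslant n$ then $x$ is one of the elements whose join forms $(n:A)$, so $x\leqslant(n:A)$. For the converse, from $x\leqslant(n:A)$ I would use monotonicity of the ideal action (the instance $A=B$, $l=x$ of part (iv) of the proposition on basic properties) to obtain $Ax\leqslant A(n:A)$, and then invoke Proposition \ref{A(n:A)}(ii), namely $A(n:A)\leqslant n$, to conclude that $Ax\leqslant n$.

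There is really no substantive obstacle here: the statement is precisely the formal adjunction between the action and the residuation operations, and every needed ingredient—the defining join, the order preservation of the action, and the two inequalities $r(n:r)\leqslant n$ and $A(n:A)\leqslant n$—has already been established above. The only point requiring a moment's care is to invoke the correct monotonicity statement in each part, the single-element version (M5)$^{\prime}$ for (i) and the ideal version for (ii), which is exactly why the authors can safely omit the details.
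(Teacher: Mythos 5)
Your proof is correct, and it is exactly the ``follows directly from definition'' argument the paper has in mind when it omits the proof: the forward direction of each biconditional is membership of $x$ in the set whose join defines $(n:r)$, respectively $(n:A)$, and the converse follows from monotonicity of the action together with the recorded inequalities $r(n:r)\leqslant n$ and $A(n:A)\leqslant n$ (Proposition \ref{A(n:A)}(ii)). The only minor imprecision is your citation in part (ii): the paper's statement (iv) on monotonicity of the ideal action is formulated for submodule elements $l\leqslant n$, whereas your $x\in M$ is arbitrary; this is harmless, since $x\leqslant y$ implies $a_{1}x+\cdots+a_{k}x\leqslant a_{1}y+\cdots+a_{k}y\leqslant Ay$ for all $a_{1},\ldots,a_{k}\in A$ (using (M5)$^{\prime}$ and axiom (S)), and hence $Ax\leqslant Ay$ for arbitrary elements of $M$.
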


Let $_{R}M$ be an le-module. If $l$ is a submodule element of $M$ and $n \in M$, we denote
\begin{center}
$(l:n) = \{r \in R: rn \leqslant l\}$
\end{center}
Then $(l:n)$ is an ideal of $R$. Also we have the following propositions.

\begin{proposition}
Let $_{R}M$ be an le-module. Then
\begin{enumerate}
\item[(i)] For every submodule elements $n \leqslant l$ of $_{R}M$, we have $(n : e) \subseteq (l : e)$.
\item[(ii)] If $\{n_{i}\}_{i \in I}$ be a family of submodule elements in $_{R}M$, then
$ (\wedge_{i \in I} n_{i} : e) = \cap_{i \in I} (n_{i} : e)$.
\end{enumerate}
\end{proposition}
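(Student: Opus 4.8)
The plan is to prove both parts purely by unwinding the definition of the colon operation $(l:n) = \{r \in R : rn \leqslant l\}$, using only transitivity of the order $\leqslant$ together with the universal property of the meet in the complete lattice $M$. Each statement is a set-theoretic relation between ideals of $R$, so it suffices to chase an arbitrary $r \in R$ through the defining condition $re \leqslant (\cdot)$.

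For part (i), I would take $r \in (n:e)$, which means $re \leqslant n$. Since $n \leqslant l$ by hypothesis, transitivity gives $re \leqslant l$, i.e. $r \in (l:e)$. This yields $(n:e) \subseteq (l:e)$ at once, with no lattice input beyond transitivity.

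For part (ii) I would establish the two inclusions separately. For the inclusion $\subseteq$, if $r \in (\wedge_{i \in I} n_i : e)$ then $re \leqslant \wedge_{i \in I} n_i$; since $\wedge_{i \in I} n_i \leqslant n_j$ for every $j \in I$, transitivity gives $re \leqslant n_j$ for all $j$, so $r \in \cap_{i \in I}(n_i : e)$ (this direction is simply part (i) applied to each $n_j$). For the inclusion $\supseteq$, if $r \in \cap_{i \in I}(n_i : e)$ then $re \leqslant n_i$ for every $i \in I$, so $re$ is a lower bound of the family $\{n_i\}_{i \in I}$; because $M$ is a complete lattice and $\wedge_{i \in I} n_i$ is the greatest lower bound, we conclude $re \leqslant \wedge_{i \in I} n_i$, i.e. $r \in (\wedge_{i \in I} n_i : e)$.

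The only genuine subtlety, and the step I would flag as the main obstacle, is this reverse inclusion in part (ii): it rests on the infimum property of the meet (greatest lower bound) rather than the defining inequalities $\wedge_{i \in I} n_i \leqslant n_j$ alone. I would also note, for the symbol $(\wedge_{i \in I} n_i : e)$ to be meaningful, that $\wedge_{i \in I} n_i$ is itself a submodule element: indeed $r(\wedge_{i \in I} n_i) \leqslant r n_j \leqslant n_j$ by (M5)$^{\prime}$ and the submodule condition on $n_j$, and $(\wedge_{i \in I} n_i)+(\wedge_{i \in I} n_i) \leqslant n_j+n_j = n_j$ by monotonicity of $+$, so both are lower bounds of the family and hence $\leqslant \wedge_{i \in I} n_i$. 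This verification is routine and could be relegated to the earlier development, after which the two displayed inclusions complete the proof.
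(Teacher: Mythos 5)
Your proof is correct; the paper itself omits the proof of this proposition (``We omit the proof, since it is easy''), and your argument is precisely the intended routine verification --- transitivity of $\leqslant$ for (i), and the greatest-lower-bound property of $\wedge_{i \in I} n_{i}$ for the nontrivial inclusion in (ii). Your additional check that $\wedge_{i \in I} n_{i}$ is itself a submodule element (via (M5)$^{\prime}$ and the monotonicity of $+$, which follows from axiom (S)) is a sound and worthwhile supplement, since the paper's definition of $(l:n)$ presupposes $l$ is a submodule element.
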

We omit the proof, since it is easy.
\begin{proposition}
Let $_{R}M$ be an le-module and $l$ be a submodule element of $M$. Then for any ideal $A$ of $R$ and $n \in M$ , $An \leqslant l $ if and only if $A \subseteq (l:n)$.
\end{proposition}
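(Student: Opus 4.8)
The plan is to prove the two implications separately, both working directly from the definition $An = \vee\{\sum_{i=1}^{k} a_i n : k \in \mathbb{N};\, a_1,\dots,a_k \in A\}$ and from the definition $(l:n) = \{r \in R : rn \leqslant l\}$. No deeper machinery is needed beyond the order-theoretic axiom (S) and the idempotency of the submodule element $l$.

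For the forward direction, I would assume $An \leqslant l$ and fix an arbitrary $a \in A$. Taking $k=1$ and $a_1 = a$ exhibits $an$ as one of the elements over which the join defining $An$ is formed, so $an \leqslant An \leqslant l$. By the definition of $(l:n)$ this is exactly the statement $a \in (l:n)$, and since $a$ was arbitrary we conclude $A \subseteq (l:n)$.

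For the converse, I would assume $A \subseteq (l:n)$, so that $an \leqslant l$ for every $a \in A$. First I would record that addition is monotone: from (S), if $x \leqslant y$ then $m + y = m + (x \vee y) = (m+x) \vee (m+y)$, whence $m + x \leqslant m + y$, so $+$ preserves order in each variable and hence jointly. Now consider any finite sum $a_1 n + a_2 n + \cdots + a_k n$ appearing in the join for $An$. Since each $a_i n \leqslant l$, monotonicity gives $a_1 n + \cdots + a_k n \leqslant l + l + \cdots + l$ with $k$ summands; because $l$ is a submodule element, $l + l \leqslant l$, and by induction the $k$-fold sum is $\leqslant l$. Thus every term of the join is $\leqslant l$, so $l$ is an upper bound for that set and the least upper bound satisfies $An \leqslant l$.

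The argument is entirely routine and I do not expect a genuine obstacle. The only place demanding a moment of care is the converse, where one must upgrade the pointwise bounds $a_i n \leqslant l$ to a bound on an arbitrary finite sum; this is dispatched by combining the monotonicity of $+$ (a consequence of (S)) with the idempotency $l + l = l$ of a submodule element, after which passing to the join over all such finite sums is immediate.
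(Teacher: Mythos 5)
Your proof is correct and follows essentially the same route as the paper's: the forward direction extracts $an \leqslant An \leqslant l$ from the $k=1$ term of the join, and the converse bounds each finite sum $a_1n + \cdots + a_kn$ by $l$ using the submodule property of $l$, then passes to the join. The only difference is that you spell out the monotonicity of $+$ (via axiom (S)) and the induction on $k$, which the paper leaves implicit in the phrase ``since $l$ is a submodule element.''
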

\begin{proof}
Let $a \in A$. Then $an \leqslant An \leqslant l$ implies that $a \in (l:n)$. Thus $A \subseteq (l:n)$.\\
Conversely, let $A \subseteq (l:n)$. Then for every $k \in \mathbb{N}$ and $a_{1},a_{2}, \cdots, a_{k} \in A$; $a_{1}n+ a_{2}n+\cdots+a_{k}n \leqslant l$, since $l$ is a submodule element. Hence $An \leqslant l$.
\end{proof}

\begin{proposition}    \label{meet}
Let $_{R}M$ be an le-module. Then for every ideals $A$ and $B$ of $R$ and submodule elements $k$, $l$ and $n$ of $M$,
\begin{enumerate}
\item[(i)]   $A \subseteq B \Rightarrow (n:B) \leqslant (n:A)$;
\item[(ii)]  $l \leqslant n \Rightarrow (l:A) \leqslant (n:A)$;
\item[(iii)] $((n:A):B) = (n:AB)$;
\item[(iv)]  $((l \wedge n): A) = (l:A) \wedge (n:A)$;
\item[(v)]   $(n:(A+B)) \leqslant (n:A) \wedge (n:B)$;
\item[(vi)]  $l \leqslant n \Rightarrow (k:n) \subseteq (k:l)$ and $(l:k) \subseteq (n:k)$;
\item[(vii)] $((l \wedge n):k) = (l:k) \cap (n:k)$;
\item[(viii)]$(k:l+n) = (k:l) \cap (k:n)$;
\item[(ix)]  $(k:l) \cap (k:n) = (k:l \vee n)$.
\end{enumerate}
\end{proposition}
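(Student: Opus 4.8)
The plan is to treat the nine identities in two groups according to which residual appears. Parts (i)--(v) involve the submodule-element residual $(n:A)$, and here the single workhorse is the adjunction recorded in Proposition~\ref{Ax lessequal n}(ii), namely $Ax \leqslant n \iff x \leqslant (n:A)$ for every $x \in M$. Parts (vi)--(ix) instead involve the ideal residual $(l:n) = \{r \in R : rn \leqslant l\}$, and there I would unwind the set-theoretic definition and read off the equalities from the module axioms (M1), (M5) and (M5)$^{\prime}$ together with the fact that $k$ is a submodule element. Throughout I would use the elementary principle that, for submodule elements $p,q$, if $x \leqslant p \iff x \leqslant q$ holds for all $x \in M$, then $p = q$ (substitute $x = p$ and $x = q$); the analogous principle for ideals handles the second group.

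For the first group, (i) and (ii) are monotonicity statements: for (i), $A \subseteq B$ forces $Ax \leqslant Bx \leqslant n$ whenever $Bx \leqslant n$, so the defining set for $(n:B)$ sits inside that for $(n:A)$ and the joins compare accordingly, and (ii) is the same argument applied to the numerator. For (iv) I would compute, for arbitrary $x$, $x \leqslant ((l \wedge n):A) \iff Ax \leqslant l \wedge n \iff Ax \leqslant l \text{ and } Ax \leqslant n \iff x \leqslant (l:A) \text{ and } x \leqslant (n:A) \iff x \leqslant (l:A) \wedge (n:A)$, and then invoke the principle above. Part (v) is immediate from (i), since $A, B \subseteq A+B$. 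The crux of the whole proposition is (iii): chaining the adjunction twice gives, for every $x$, $x \leqslant ((n:A):B) \iff Bx \leqslant (n:A) \iff A(Bx) \leqslant n$, and then the associativity of the ideal action $A(Bx) = (AB)x$ converts this to $(AB)x \leqslant n \iff x \leqslant (n:AB)$, whence equality.

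For the second group the computations are direct. In (vi), if $rn \leqslant k$ then $rl \leqslant rn \leqslant k$ by (M5)$^{\prime}$ and $l \leqslant n$, giving $(k:n) \subseteq (k:l)$, while $rk \leqslant l \leqslant n$ gives $(l:k) \subseteq (n:k)$. Statement (vii) is read off from $rk \leqslant l \wedge n \iff rk \leqslant l \text{ and } rk \leqslant n$. For (viii) I would use (M1) to write $r(l+n) = rl + rn$ and note that $rl + rn \leqslant k$ is equivalent to $rl \leqslant k$ and $rn \leqslant k$ (forward because $rl \leqslant rl + rn$, backward because $k$ is an idempotent submodule element, so $rl, rn \leqslant k$ force $rl + rn \leqslant k + k = k$). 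Similarly (ix) uses (M5): $r(l \vee n) = rl \vee rn \leqslant k \iff rl \leqslant k \text{ and } rn \leqslant k$.

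The main obstacle is the single subtlety hidden in (iii): the associativity $A(Bx) = (AB)x$ was stated for submodule elements, whereas here $x$ is an arbitrary element of $M$. This is harmless, because $Bx$ is a submodule element by the first Proposition of this section and the proof of associativity only uses that the relevant products are submodule elements; concretely, I would apply the adjunction $Ay \leqslant n \iff y \leqslant (n:A)$ with $y = Bx$. The remaining care is purely bookkeeping: checking that each ``$\iff$'' genuinely holds for all $x$ (respectively all $r$) before collapsing it to an equality of submodule elements (respectively ideals).
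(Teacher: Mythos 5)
Your proof is correct, and on most parts it coincides with the paper's: (i), (ii) by comparing the defining sets of the two joins, (v) as a corollary of (i), and (vi)--(ix) by unwinding $(l:n)=\{r\in R: rn\leqslant l\}$ exactly as the paper does, including the two small points you isolate in (viii) ($rl\leqslant rl+rn$ for the forward direction, $k+k=k$ for the backward one). The genuine divergence is in (iii) and (iv). The paper proves each of these as two separate one-sided inequalities driven by the counit $A(n:A)\leqslant n$ of Proposition~\ref{A(n:A)} together with monotonicity --- for (iii), $AB((n:A):B)\leqslant A(n:A)\leqslant n$ gives one direction and $AB(n:AB)\leqslant n$ the other --- and, crucially, it only ever applies the associativity law $A(Bm)=(AB)m$ to the submodule elements $((n:A):B)$ and $(n:AB)$, so it never leaves the stated hypotheses of the earlier proposition. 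You instead run a single chain of equivalences $x\leqslant \mathrm{LHS} \iff \cdots \iff x\leqslant \mathrm{RHS}$ through the adjunction of Proposition~\ref{Ax lessequal n} and then specialize $x$; this is more uniform (one template handles both (iii) and (iv)), but it forces you to invoke $A(Bx)=(AB)x$ for an \emph{arbitrary} $x\in M$, which is formally outside the scope of the stated lemma. Your repair of this is sound: the paper's proof of that identity never uses that the base element is a submodule element, only that $A(Bn)$ and $(AB)n$ are, which the section's first proposition guarantees for every $n\in M$. You could even avoid the repair entirely: your substitution principle only needs the equivalence chain at the two elements $x=((n:A):B)$ and $x=(n:AB)$, and both of these \emph{are} submodule elements, so the associativity law applies exactly as stated. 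In short, the two arguments buy different things --- the paper's stays strictly within previously proved results, while your Galois-connection formulation is cleaner and more systematic at the cost of one (correctly justified) widening of a lemma's scope.
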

\begin{proof}
(i) and (ii) are trivial.\\
(iii) $AB((n:A):B) \leqslant A(n:A) \leqslant n$, (by Proposition \ref{A(n:A)}) implies that $((n:A):B) \leqslant (n:AB)$. Also $AB(n:AB) \leqslant n$ implies that $B(n:AB) \leqslant (n:A)$ and so $(n:AB) \leqslant ((n:A):B)$, by Proposition \ref{Ax lessequal n}. Thus $((n:A):B) = (n:AB)$.\\
(iv) $l \wedge n \leqslant l$ implies that $((l \wedge n):A) \leqslant (l:A)$. Similarly $((l \wedge n):A) \leqslant (n:A)$. Thus $((l \wedge n):A) \leqslant (l:A) \wedge (n:A)$. Also $A((l:A) \wedge (n:A)) \leqslant A(l:A) \wedge A(n:A) \leqslant l \wedge n$. Thus $((l:A) \wedge (n:A)) \leqslant ((l \wedge n):A)$. Therefore $((l \wedge n): A) = (l:A) \wedge (n:A)$.\\
(v) $A \subseteq A+B$ and $B \subseteq A+B$ implies that $(n:(A+B)) \leqslant (n:A)$ and  $(n:(A+B)) \leqslant (n:B)$ and hence $(n:(A+B)) \leqslant (n:A) \wedge (n:B)$. \\
(vi) Let $r \in (k:n)$. Then $rn \leqslant k$ and since $l \leqslant n$, so $rl \leqslant rn \leqslant k$. Thus $r \in (k:l)$. Therefore $(k:n) \subseteq (k:l).$ Now let $r \in (l:k)$. Then $rk \leqslant l \leqslant n$ implies that $r \in (n:k)$. So $(l:k) \subseteq (n:k)$.\\
(vii) $r \in ((l \wedge n):k)$ if and only if $rk \leqslant l \wedge n$ if and only if $rk \leqslant l$ and $rk \leqslant n$ if and only if $r \in (l:k)$ and $r \in (n:k)$ if and only if $r \in (l:k) \cap (n:k)$. Thus $((l \wedge n):k) = (l:k) \cap (n:k)$.\\
(viii) Let $r \in (k:l+n)$. Then $r(l+n) \leqslant k$, i.e, $rl+rn = r(l+n) \leqslant k$. Now $rl = rl+0_{M} \leqslant rl+rn \leqslant k$,(since $rn$ is a submodule element) and $rn = 0_{M}+rn \leqslant rl+rn \leqslant k$ shows that $r \in (k:l)$ and $r \in (k:n)$. So $r \in (k:l) \cap (k:n)$. Thus $(k:l+n) \subseteq (k:l) \cap (k:n)$. Now let $r \in (k:l) \cap (k:n)$. Then $rl \leqslant k$ and $rn \leqslant k$. Thus $r(l+n) = rl+rn \leqslant k+k = k$,(since $k$ is a submodule element), i.e, $r \in (k:l+n)$. Therefore $(k:l) \cap (k:n) \subseteq (k:l+n)$. Hence $(k:l+n) = (k:l) \cap (k:n)$.\\
(ix) Let $r \in (k:l \vee n)$. Then $rl \vee rn = r(l \vee n) \leqslant k$. Now $rl \leqslant rl \vee rn \leqslant k$ implies that $r \in (k:l)$. Similarly $r \in (k:n)$. Therefore $r \in (k:l) \cap (k:n)$. Thus $(k:l \vee n) \subseteq (k:l) \cap (k:n)$. Now let $r \in (k:l) \cap (k:n)$. Then $rl \leqslant k$ and $rn \leqslant k$. It follows that $rl \vee rn \leqslant k$, i.e, $r(l \vee n) \leqslant k$. So $r \in (k:l \vee n)$. Thus $(k:l) \cap (k:n) \subseteq (k:l \vee n)$. Hence $(k:l) \cap (k:n) = (k:l \vee n)$
\end{proof}

\section{Prime and primary elements}
In this section we introduce and characterize the prime and primary elements in an le-module.
\begin{definition}
Let $_{R}M$ be an le-module and $n$ be a proper submodule element of $M$. Then $n$ is called a primary element of $M$ if for any $a \in R$ and $x \in M$, $ax \leqslant n$ implies that either $x \leqslant n$ or $a^me \leqslant n$, i.e, either $x \leqslant n$ or $a^m \in (n:e)$, for some $m \in \mathbb{N}$.
\end{definition}

Recall that if $I$ is an ideal of a ring $R$, then
\begin{center}
Rad$(I) = \{a \in R : a^n \in I$, for some positive integer $n$\}
\end{center}
and we call Rad$(I)$ the radical of the ideal $I$ in $R$.

Let $_{R}M$ be an le-module and $n$ be a submodule element of $M$. Then $(n:e)$ is an ideal of $R$. Now we set
\begin{center}
Rad$(n)$ = Rad$(n:e)$,
\end{center}
and we call Rad$(n)$ the radical of the submodule element $n$. Now we have the following propositions:

\begin{proposition}    \label{Radical of q}
If $l,n$ and $n_{1},n_{2},\cdots,n_{k}$ are submodule elements of an le-module $_{R}M$ then
\begin{enumerate}
\item[(i)] $l \leqslant n \Rightarrow Rad(l) \subseteq Rad(n)$
\item[(ii)] $Rad(Rad(n)) = Rad(n)$
\item[(iii)] $Rad(\wedge n_{i}) = Rad(n_{1}) \cap Rad(n_{2}) \cap \cdots \cap Rad(n_{k})$.
\end{enumerate}
\end{proposition}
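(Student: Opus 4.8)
The plan is to reduce all three statements to standard facts about radicals of ideals in the commutative ring $R$, using that $Rad(n)$ is by definition the ideal radical $Rad(n:e)$ together with the two earlier propositions describing how $(\,\cdot\, : e)$ behaves under the order relation and under meets. The key observation throughout is that everything can be transported from $M$ to $R$ via the ideal $(n:e)$, after which only elementary properties of the ideal radical are needed.

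For (i), I would start from $l \leqslant n$ and apply the earlier proposition stating that $n \leqslant l$ implies $(n:e) \subseteq (l:e)$, with the two submodule elements in the appropriate roles, to obtain $(l:e) \subseteq (n:e)$. Since the radical of an ideal is monotone (if $I \subseteq J$ then $Rad(I) \subseteq Rad(J)$, immediate from the definition of $Rad$), this gives $Rad(l:e) \subseteq Rad(n:e)$, that is, $Rad(l) \subseteq Rad(n)$.

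For (ii), I would first note that $Rad(n) = Rad(n:e)$ is an ideal of $R$, so the outer $Rad$ is the ordinary radical of that ideal rather than a second application of the submodule radical. The claim is then precisely the idempotency of the ideal radical, $Rad(Rad(I)) = Rad(I)$ with $I = (n:e)$: if $a^{k} \in Rad(I)$ then $(a^{k})^{j} = a^{kj} \in I$ for some $j \in \mathbb{N}$, so $a \in Rad(I)$, which gives $Rad(Rad(I)) \subseteq Rad(I)$, while the reverse inclusion is trivial. For (iii), I would use the earlier proposition $(\wedge_{i} n_{i} : e) = \cap_{i}(n_{i}:e)$ to rewrite the left side as $Rad(\cap_{i=1}^{k}(n_{i}:e))$, and then invoke the standard fact that the radical commutes with finite intersections, $Rad(\cap_{i=1}^{k} I_{i}) = \cap_{i=1}^{k} Rad(I_{i})$. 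Here $\subseteq$ follows from $\cap_{i} I_{i} \subseteq I_{j}$ for each $j$ and monotonicity, and $\supseteq$ follows because if $a^{m_{j}} \in I_{j}$ for each $j$ then $a^{\max_{j} m_{j}} \in \cap_{j} I_{j}$. This yields $Rad(\cap_{i}(n_{i}:e)) = \cap_{i} Rad(n_{i}:e) = \cap_{i} Rad(n_{i})$.

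There is no deep obstacle here; the only point requiring care is in (iii), where commuting the radical past the intersection genuinely requires the index set to be finite, since one must pass to the common exponent $\max_{j} m_{j}$. This is exactly why the statement is restricted to finitely many submodule elements $n_{1}, \dots, n_{k}$. I would also be explicit in (ii) that the outer $Rad$ means the ring-ideal radical applied to the ideal $Rad(n)$, so that the nested notation is unambiguous.
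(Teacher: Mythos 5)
Your proposal is correct. The paper proves the proposition by direct element-chasing in $M$: it unfolds $a \in Rad(n)$ as $a^{m}e \leqslant n$ for some $m \in \mathbb{N}$ and then runs the monotonicity argument for (i), the exponent-composition argument ($t = mk$) for (ii), and the maximum-exponent argument for (iii) as inequalities against the submodule elements themselves. You instead transport the whole statement into the ring $R$ first, via the earlier proposition on $(\,\cdot\,:e)$ --- its part (i) for monotonicity and its part (ii), $(\wedge_{i} n_{i} : e) = \cap_{i}(n_{i}:e)$, for your treatment of (iii) --- and then quote (or re-prove) the standard facts that the ideal radical is monotone, idempotent, and commutes with finite intersections. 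Since $a^{m} \in (n:e)$ and $a^{m}e \leqslant n$ are the same statement by definition of $(n:e)$, the computational core is identical in both treatments; in particular the passage to $\max_{j} m_{j}$, which is exactly where finiteness enters, occurs in both, and you are right to flag it. What your factorization buys is modularity: the le-module content is isolated in the already-established behavior of $(\,\cdot\,:e)$, and everything else is literally commutative ring theory; the paper's version, by contrast, is self-contained and does not lean on that earlier proposition (whose proof the paper omits). Your explicit remark that the outer $Rad$ in (ii) must be read as the ideal radical applied to the ideal $Rad(n)$, rather than a second application of the submodule-element radical, resolves an ambiguity that the paper leaves implicit, though its proof of (ii) tacitly makes the same reading.
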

\begin{proof}
(i) Let $a \in Rad(l)$. Then $a^me \leqslant l \leqslant n$, for some $m \in \mathbb{N}$. Therefore $a \in Rad(n)$ and so $Rad(l) \subseteq Rad(n)$.\\
(ii) We have $Rad(n) \subseteq Rad(Rad(n))$. Let $a \in Rad(Rad(n))$. Then $a^m \in Rad(n)$, for some $m \in \mathbb{N}$. This implies that $(a^m)^ke \leqslant n$, for some $k \in \mathbb{N}$, i.e, $a^{t}e \leqslant n$, for some $t = mk \in \mathbb{N}$. Thus $a \in Rad(n)$. Hence $Rad(Rad(n)) = Rad(n)$.\\
(iii) Let $a \in Rad(\wedge n_{i})$. Then $a^me \leqslant \wedge n_{i} \leqslant n_{i}$, for some $m \in \mathbb{N}$. This shows that $a \in Rad(n_{i})$, for each $i$ and hence $a \in Rad(n_{1}) \cap Rad(n_{2}) \cap \cdots \cap Rad(n_{k})$. Thus $Rad(\wedge n_{i}) \subseteq  Rad(n_{1}) \cap Rad(n_{2}) \cap \cdots \cap Rad(n_{k})$. Now let $a \in Rad(n_{1}) \cap Rad(n_{2}) \cap \cdots \cap Rad(n_{k})$. Then $a^{m_{1}}e \leqslant n_{1}, a^{m_{2}}e \leqslant n_{2}, \cdots, a^{m_{k}}e \leqslant n_{k}$, for some $m_{1},m_{2},\cdots,m_{k} \in \mathbb{N}$. Let $r = max\{m_{1},m_{2},\cdots,m_{k}\}$. Then $a^re \leqslant n_{i}$, for each $i$, i.e, $a^re \leqslant \wedge n_{i}$. Therefore $a \in Rad(\wedge n_{i})$ and it follows that $Rad(n_{1}) \cap Rad(n_{2}) \cap \cdots \cap Rad(n_{k}) \subseteq Rad(\wedge n_{i})$. Hence $Rad(\wedge n_{i}) = Rad(n_{1}) \cap Rad(n_{2}) \cap \cdots \cap Rad(n_{k})$.
\end{proof}

\begin{proposition}
Let $q$ be a primary element of an le-module $_{R}M$. Then Rad$(q)$ is a prime ideal of $R$.
\end{proposition}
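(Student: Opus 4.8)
The plan is to establish three facts: that $Rad(q)$ is an ideal of $R$, that it is a proper ideal, and that it satisfies the defining implication of a prime ideal. Since $Rad(q) = Rad(q:e)$ is by definition the radical of the ideal $(q:e)$ of $R$, it is automatically an ideal (the radical of any ideal of a commutative ring is an ideal), so no work is needed there. For properness, I would argue by contradiction: if $Rad(q) = R$ then $1 \in Rad(q)$, which by definition of the radical gives $1^m e = e \leqslant q$ for some $m \in \mathbb{N}$, forcing $q = e$ and contradicting the hypothesis that $q$ is proper. Hence $Rad(q) \neq R$.

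The core of the argument is primeness. Suppose $ab \in Rad(q)$ but $a \notin Rad(q)$; the goal is to deduce $b \in Rad(q)$. Unwinding the radical, $ab \in Rad(q)$ means $(ab)^n e \leqslant q$ for some $n \in \mathbb{N}$, and since $R$ is commutative and the action is associative by (M3), this reads $a^n(b^n e) \leqslant q$. The key move is to feed this into the primary condition, taking the ring element to be $a^n$ and the module element to be $x = b^n e$. Primariness of $q$ then yields one of two alternatives: either $b^n e \leqslant q$, or $(a^n)^m e \leqslant q$ for some $m \in \mathbb{N}$. In the first case $b^n \in (q:e)$, so $b \in Rad(q)$, as wanted. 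In the second case $a^{nm} e \leqslant q$ gives $a \in Rad(q)$, contradicting our assumption. Therefore $b \in Rad(q)$, and $Rad(q)$ is prime.

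I do not expect a serious obstacle here; the one point requiring care is the legitimacy of applying the primary condition to $a^n$ and $b^n e$ rather than to $a$ and $b$ directly. This is permitted because the definition of a primary element quantifies over all ring elements and all elements of $M$, and indeed $a^n \in R$, $b^n e \in M$; commutativity of $R$ together with associativity of the action is precisely what lets us rewrite $(ab)^n e$ as $a^n(b^n e)$. Everything else is routine bookkeeping with the identity $Rad(q) = \{a \in R : a^t e \leqslant q \text{ for some } t \in \mathbb{N}\}$.
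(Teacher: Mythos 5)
Your proof is correct and follows essentially the same route as the paper: both arguments apply the primary condition to the ring element $a^{n}$ and the module element $b^{n}e$ extracted from $(ab)^{n}e \leqslant q$, differing only in which factor is assumed outside $\mathrm{Rad}(q)$. The remaining differences are cosmetic --- you cite the standard fact that the radical of an ideal of a commutative ring is an ideal where the paper verifies closure directly via a binomial expansion, and you add the properness check $\mathrm{Rad}(q) \neq R$, which the paper omits but which is genuinely required for $\mathrm{Rad}(q)$ to be prime.
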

\begin{proof}
Let $a,b \in$ Rad$(q)$. Then $a^me \leqslant q$ and $b^ne \leqslant q$, for some $m, n \in \mathbb{N}$. So
\begin{center}
$(a-b)^{m+n}e = \displaystyle \sum_{k=0}^{m+n} (-1)^k {m+n \choose k}a^{m+n-k}b^ke \leqslant q$
\end{center}
since either $m+n-k \geqslant m$ or $k \geqslant n$, and $q$ is a submodule element. Thus $a-b \in$ Rad$(q)$. Also for any $a \in$ Rad$(q)$ and $r \in R$, $(ra)^me = r^ma^me \leqslant q$ shows that $ra \in$ Rad$(q)$. Hence Rad$(q)$ is an ideal of $R$. Let $ab \in$ Rad$(q)$ and $b \notin$ Rad$(q)$. Then $(ab)^n = a^nb^n \in (q:e)$, for some $n \in \mathbb{N}$, i.e, $a^nb^ne \leqslant q$. Since $b \notin$ Rad$(q)$ so $b^r \notin (q:e)$, for every $r \in \mathbb{N}$. In particular $b^n \notin (q:e)$, i.e, $b^ne \nleqslant q$. But $q$ is a primary element and so $(a^n)^ke \leqslant q$, for some $k \in \mathbb{N}$. Thus $a \in$ Rad$(q)$ and hence Rad$(q)$ is a prime ideal of $R$.
\end{proof}

If $q$ is a primary element of an le-module $_{R}M$ and Rad$(q) = P$, we say that $q$ is $P$-primary.

Our following proposition gives a relation between primary elements of an le-module $_{R}M$ and maximal ideals of $R$.
\begin{proposition}
Let $q$ be a submodule element of an le-module $_{R}M$. If $Rad(q) = m$ is a maximal ideal of $R$ then $q$ is a primary element.
\end{proposition}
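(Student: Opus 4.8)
The plan is to reduce everything, via the definition of a primary element, to the single implication: if $ax\leqslant q$ and $a\notin Rad(q)=\mathfrak{m}$, then $x\leqslant q$ (here I write $\mathfrak{m}$ for the maximal ideal to avoid clashing with the exponent in the definition of primary). Before treating that implication I would first record that $q$ is proper. Indeed $\mathfrak{m}$, being maximal, is a proper ideal, whereas $Rad(e)=(e:e)=R$ since every element of $M$ lies below the greatest element $e$; hence $Rad(q)=\mathfrak{m}\neq R$ forces $q\neq e$.

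For the main implication the key algebraic input is maximality of $\mathfrak{m}$. Fix $a\notin\mathfrak{m}$ with $ax\leqslant q$. Since $\mathfrak{m}$ is maximal and $a\notin\mathfrak{m}$, the ideal $\mathfrak{m}+(a)$ equals $R$, so I can write $1=ab+c$ for some $b\in R$ and $c\in\mathfrak{m}$. As $c\in\mathfrak{m}=Rad(q)$, there is $k\in\mathbb{N}$ with $c^{k}e\leqslant q$. Expanding $1=1^{k}=(ab+c)^{k}$ by the binomial theorem and collecting the summands that carry a factor $a$, I obtain $1=at+c^{k}$ for a suitable $t\in R$, the only summand free of $a$ being $c^{k}$.

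The final step transfers this ring identity into $M$ using the module axioms, and this is where the inequality-only axiom (M2) must be used in the favourable direction. Writing $x=1\cdot x=(at+c^{k})x$ and applying (M2) gives $x\leqslant(at)x+c^{k}x$. For the first summand, $(at)x=t(ax)$ by commutativity of $R$ together with (M3); since $ax\leqslant q$, monotonicity (M5)$^{\prime}$ yields $t(ax)\leqslant tq$, and $tq\leqslant q$ because $q$ is a submodule element. For the second summand, $x\leqslant e$ with (M5)$^{\prime}$ gives $c^{k}x\leqslant c^{k}e\leqslant q$. Hence $x\leqslant q+q=q$, the last equality holding since every submodule element is idempotent. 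Thus $x\leqslant q$, and $q$ is primary.

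I expect the only real obstacle to be bookkeeping rather than conceptual difficulty: one must check that the direction of (M2) is the helpful one (it is, because $1\cdot x$ sits on the left and is dominated by the sum into which it expands) and that the binomial expansion leaves exactly one term $c^{k}$ with no factor of $a$ while every remaining term retains such a factor. The maximality of $\mathfrak{m}$ is precisely what lets the identity $1=ab+c$ replace the localization argument used in the classical ring-theoretic version of this statement.
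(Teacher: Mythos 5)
Your proof is correct and takes essentially the same route as the paper's: maximality of $\mathfrak{m}$ gives $1=ab+c$ with $c\in\mathfrak{m}$, the binomial expansion gives $1=at+c^{k}$ with $c^{k}e\leqslant q$, and the module axioms then force $x\leqslant q+q=q$. If anything, your write-up is slightly more careful than the paper's, which omits the check that $q$ is proper and writes the (M2) step as an equality $x=u^{m}x+sax$ where only the inequality $x\leqslant u^{m}x+sax$ is justified (and is all that is needed).
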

\begin{proof}
Let $a \in R$ and $x \in M$ be such that $ax \leqslant q$ and $x \nleqslant q$. If possible, let $a \notin Rad(q)$. Then $\langle Rad(q)\cup \{a\}\rangle = R$, by the maximality of $Rad(q)$. Since $1 \in R$, $1 = u+ra$, for some $u \in Rad(q)$ and $r \in R$. Since $u \in Rad(q)$ so $u^me \leqslant q$, for some $m \in \mathbb{N}$. Now $1 = 1^m = (u+ra)^m = u^m+sa$, for some $s \in R$. Then $x = u^mx+sax \leqslant u^me+sq \leqslant q+q = q$, a contradiction. Thus $q$ is a $m$-primary element.
\end{proof}

\begin{proposition}
Let $_{R}M$ be an le-module and $q_{1},q_{2},\cdots,q_{r}$ be $P$-primary elements of $M$. Then $q_{1} \wedge q_{2} \wedge \cdots \wedge q_{r}$ is $P$-primary.
\end{proposition}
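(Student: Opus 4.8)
The plan is to set $q = q_1 \wedge q_2 \wedge \cdots \wedge q_r$ and verify the two requirements for $q$ to be $P$-primary: that $\mathrm{Rad}(q) = P$, and that $q$ satisfies the defining implication of a primary element.

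First I would confirm that $q$ is a proper submodule element. That $q$ is a submodule element follows from the monotonicity of the ring action (M5)$^{\prime}$ and of addition (the latter being a consequence of (S)): since $q \leqslant q_i$ for each $i$, we get $rq \leqslant rq_i \leqslant q_i$ and $q+q \leqslant q_i+q_i = q_i$ for every $i$, whence $rq \leqslant q$ and $q+q \leqslant q$. Properness is immediate, since $q \leqslant q_1$ and $q_1 \neq e$ force $q \neq e$. The radical is then handled directly by Proposition \ref{Radical of q}(iii), which gives $\mathrm{Rad}(q) = \mathrm{Rad}(q_1) \cap \cdots \cap \mathrm{Rad}(q_r) = P \cap \cdots \cap P = P$.

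The remaining, and central, step is to verify the primary condition. Suppose $a \in R$ and $x \in M$ satisfy $ax \leqslant q$ but $x \nleqslant q$. Because $q = \wedge_i q_i$, the hypothesis $x \nleqslant q$ forces $x \nleqslant q_j$ for at least one index $j$ (otherwise $x$ would lie below every $q_i$ and hence below their meet). On the other hand $ax \leqslant q \leqslant q_j$. Applying the primary property of $q_j$ to the pair $(a,x)$ together with $x \nleqslant q_j$ yields $a \in \mathrm{Rad}(q_j) = P = \mathrm{Rad}(q)$, i.e.\ $a^m e \leqslant q$ for some $m \in \mathbb{N}$. This establishes that $q$ is primary, and combined with the radical computation it is $P$-primary.

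The argument is essentially a reduction to a single primary element; the only point requiring care is the contrapositive observation that failure of $x \leqslant q$ localises to failure of $x \leqslant q_j$ for some $j$, which is exactly what allows the primary property of an individual $q_j$ to be invoked. I do not anticipate a genuine obstacle beyond assembling these pieces, since the radical identity of Proposition \ref{Radical of q}(iii) does the numerical work and the meet reduction does the rest.
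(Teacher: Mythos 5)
Your proposal is correct and follows essentially the same route as the paper's own proof: compute $\mathrm{Rad}(q)$ via Proposition \ref{Radical of q}(iii), then reduce the primary condition to a single $q_j$ via the observation that $x \nleqslant \wedge_i q_i$ forces $x \nleqslant q_j$ for some $j$. Your explicit verification that $q$ is a proper submodule element is a small addition the paper leaves implicit, but it does not change the argument.
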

\begin{proof}
Let $q = q_{1} \wedge q_{2} \wedge \cdots \wedge q_{r}$. It is evident that $Rad(\wedge q_{i}) = Rad(q_{1}) \cap Rad(q_{2}) \cap \cdots \cap Rad(q_{r}) = P$, by Proposition \ref{Radical of q}.
Now suppose that $ax \leqslant q$, $a \in R$, $x \in M$. If $x \nleqslant q$, then $x \nleqslant q_{i}$, for some $i(1 \leqslant i \leqslant r)$. Since $q_{i}$ is primary, $ax \leqslant q \leqslant q_{i}$ and $x \nleqslant q_{i}$ implies that $a \in Rad(q_{i}) = P = Rad(q)$. Thus $q$ is $P$-primary.
\end{proof}

\begin{theorem}    \label{(q:n) is primary}
Let $q$ be a $P$-primary element of an le-module $_{R}M$ and $n \in M$. Then the following results hold:
\begin{enumerate}
\item[(i)] If $n \leqslant q$ then $(q:n) = R$.
\item[(ii)] If $n \nleqslant q$ then $(q:n)$ is a $P$-primary ideal of $R$.
\end{enumerate}
\end{theorem}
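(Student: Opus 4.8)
The plan is to treat the two parts separately. Part (i) is immediate: given $n \leqslant q$, for every $r \in R$ I have $rn \leqslant rq$ by (M5)$'$ and $rq \leqslant q$ because $q$ is a submodule element, so $rn \leqslant q$; thus every $r \in R$ lies in $(q:n)$ and $(q:n) = R$.

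For part (ii), I first note that $(q:n)$ is a proper ideal: it is an ideal by the discussion preceding the theorem, and it is proper since $1_R \cdot n = n \nleqslant q$ by (M4), so $1_R \notin (q:n)$. It then remains to prove $\mathrm{Rad}(q:n) = P$ and that $(q:n)$ is primary. The inclusion $P \subseteq \mathrm{Rad}(q:n)$ is easy: if $a \in P = \mathrm{Rad}(q)$ then $a^m e \leqslant q$ for some $m$, and since $n \leqslant e$ I get $a^m n \leqslant a^m e \leqslant q$ by (M5)$'$, whence $a \in \mathrm{Rad}(q:n)$.

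The reverse inclusion $\mathrm{Rad}(q:n) \subseteq P$ is the crux of the argument, and it is precisely here that the hypothesis $n \nleqslant q$ is essential. I would prove by induction on $k$ the claim that $a^k n \leqslant q$ implies $a \in P$. For $k = 1$, the relation $an \leqslant q$ together with $n \nleqslant q$ and the primariness of $q$ forces $a \in \mathrm{Rad}(q) = P$. For the inductive step I rewrite $a^k n = a\,(a^{k-1} n)$ using (M3) and apply the primary property: either $a \in P$ and we are done, or $a^{k-1} n \leqslant q$, in which case the inductive hypothesis gives $a \in P$. This yields $\mathrm{Rad}(q:n) = P$.

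To finish, I verify that $(q:n)$ is primary. Suppose $ab \in (q:n)$ with $a \notin (q:n)$; then $(ab)n \leqslant q$, which by (M3) reads $b\,(an) \leqslant q$, while $an \nleqslant q$ since $a \notin (q:n)$. Applying primariness of $q$ to the element $an$ of $M$ gives $b \in \mathrm{Rad}(q) = P = \mathrm{Rad}(q:n)$, which is exactly the primary condition for the ideal $(q:n)$. The point to watch throughout is that $n$ is only assumed to be an element of $M$, not a submodule element, so I must argue using (M3), (M5)$'$, and $n \leqslant e$ alone, without invoking any closure of $n$ under addition or the $R$-action.
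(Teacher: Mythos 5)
Your proof is correct and follows essentially the same route as the paper's: both hinge on applying the primariness of $q$ to suitable products and using $n \leqslant e$ to pass between $(q:e)$ and $(q:n)$, with the primary condition for the ideal verified via $b(an) \leqslant q$ (the paper uses the symmetric form $a(bn) \leqslant q$). The only differences are cosmetic: your induction for $\mathrm{Rad}(q:n) \subseteq P$ can be collapsed into a single application of primariness to the ring element $a^m$ (as the paper does, getting $(a^m)^k e \leqslant q$ at once), and your explicit properness check $1_R \notin (q:n)$ is a small point the paper leaves implicit.
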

\begin{proof}
(i) Let $n \leqslant q$ and $r \in R$. Then $rn \leqslant rq \leqslant q$ implies that $r \in (q:n)$ for every $r \in R$. Therefore $(q:n) = R$.\\
(ii)We have $(q:n)$ is an ideal of $R$. Let $a,b \in R$ be such that $ab \in (q:n)$ and $b \notin (q:n)$. Then $abn \leqslant q$ and $bn \nleqslant q$. Since $q$ is primary, $a^me \leqslant q$, for some $m \in \mathbb{N}$. Now $a^mn \leqslant a^me \leqslant q$ implies that $a^m \in (q:n)$. Thus $(q:n)$ is a primary ideal of $R$. To show $Rad(q:n) = P$, let $a \in Rad(q:n)$. Then $a^mn \leqslant q$, for some $m \in \mathbb{N}$. Since $q$ is primary and $n \nleqslant q$, so $(a^m)^ke \leqslant q$, for some $k \in \mathbb{N}$. Thus $a \in Rad(q)$, and hence $Rad(q:n) \subseteq P$. Now let $a \in P$. Then $a^m \in (q:e)$, for some $m \in \mathbb{N}$ implies that $a^mn \leqslant a^me \leqslant q$. Hence $a \in Rad(q:n)$ and so $P \subseteq Rad(q:n)$. Therefore $Rad(q:n) = P$. Hence $(q:n)$ is a $P$-primary ideal of $R$.
\end{proof}

\begin{theorem}
Let $q$ be a $P$-primary element of an le-module $_{R}M$ and $a \in R$. If $a \notin P$ then $(q:a) = q$.
\end{theorem}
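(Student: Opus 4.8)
The plan is to establish the two inequalities $q \leqslant (q:a)$ and $(q:a) \leqslant q$ separately, with the first being immediate and the second being where the primary hypothesis does its work.

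First I would record the containment $q \leqslant (q:a)$. This holds for every $a \in R$ with no hypothesis on $q$ beyond being a submodule element: since $aq \leqslant q$, the element $q$ belongs to the set $\{x \in M : ax \leqslant q\}$, and hence $q \leqslant \vee\{x : ax \leqslant q\} = (q:a)$. (This is just the inequality $n \leqslant (n:r)$ noted after the definition of $(n:r)$.)

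For the reverse inequality $(q:a) \leqslant q$, the key move is to apply the primary condition to the submodule element $(q:a)$ \emph{itself}. I would first invoke the companion fact $r(n:r) \leqslant n$ recorded alongside the definition, which gives $a(q:a) \leqslant q$. Now set $x = (q:a) \in M$ and $a \in R$: we have $ax \leqslant q$. Since $q$ is primary, either $x \leqslant q$ or $a^m e \leqslant q$ for some $m \in \mathbb{N}$, i.e.\ $a \in \mathrm{Rad}(q) = P$. The hypothesis $a \notin P$ rules out the second alternative, forcing $x = (q:a) \leqslant q$.

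Combining the two inequalities yields $(q:a) = q$. I do not anticipate a genuine obstacle here; the only thing to be careful about is recognizing that the primary condition should be fed the element $(q:a)$ rather than an arbitrary generator of it, so that the supremum defining $(q:a)$ is handled in one stroke via the standard inequality $a(q:a) \leqslant q$ rather than element by element.
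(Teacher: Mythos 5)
Your proposal is correct and follows essentially the same route as the paper: both establish $a(q:a) \leqslant q$ (the paper recomputes it from the join definition via (M5), you cite the recorded fact $r(n:r) \leqslant n$), then apply the primary condition to the single element $(q:a)$, using $a \notin P$ to rule out the branch $a^m e \leqslant q$, and finish with the standard containment $q \leqslant (q:a)$. Your write-up is in fact slightly more explicit than the paper's, which leaves the application of the primary property to $(q:a)$ implicit in the phrase ``it follows that $(q:a) \leqslant q$.''
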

\begin{proof}
We have  $a(q:a) = a(\vee_{ax \leqslant q} x) = \vee_{ax \leqslant q} (ax) \leqslant q$. Since $q$ is $P$-primary and $a \notin P$, so it follows that $(q:a) \leqslant q$. Also we have $q \leqslant (q:a)$. Thus $(q:a) = q$.
\end{proof}

Now we introduce the prime submodule elements of an le-module $_{R}M$.
\begin{definition}
A proper submodule element $p$ of an le-module $_{R}M$ is said to be a prime submodule element if for every $r \in R$ and $n \in M$, $rn \leqslant p$ implies that $r \in (p:e) $ or $n \leqslant p$.
\end{definition}

In the following result we characterize relationship of the prime submodule elements of $M$ with the prime ideals of the ring $R$.
\begin{theorem}
Let $p$ be a prime submodule element of an le-module $_{R}M$ and $x \in M$. Then $(p:x)$ is a prime ideal of $R$ for every $x \in M$.
\end{theorem}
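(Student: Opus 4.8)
The plan is to verify directly that $(p:x)$ satisfies the two defining requirements of a prime ideal, leaning entirely on the definition of a prime submodule element. Recall that $(p:x) = \{r \in R : rx \leqslant p\}$ has already been shown to be an ideal of $R$, so the only work is to check that it is proper and that $ab \in (p:x)$ forces $a \in (p:x)$ or $b \in (p:x)$.

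For properness I would first note that $1 \in (p:x)$ would mean $x = 1\cdot x \leqslant p$; hence $(p:x)$ is proper exactly when $x \nleqslant p$. (When $x \leqslant p$ one has $rx \leqslant rp \leqslant p$ for every $r \in R$, so $(p:x) = R$, which is not a prime ideal; the statement is therefore to be read in the substantive case $x \nleqslant p$, and I would make this restriction explicit at the start.)

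For the prime property, suppose $a,b \in R$ satisfy $ab \in (p:x)$, so that $a(bx) = (ab)x \leqslant p$. Applying the definition of prime submodule element to the pair $r = a$ and $n = bx$, from $a(bx) \leqslant p$ we conclude that either $a \in (p:e)$ or $bx \leqslant p$. In the second case $bx \leqslant p$ says precisely that $b \in (p:x)$, and we are finished.

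The only step needing a moment's care is the first case, $a \in (p:e)$. Here I would use that $e$ is the greatest element, so $x \leqslant e$, and then (M5)$^{\prime}$ gives $ax \leqslant ae$; combined with $ae \leqslant p$, which is exactly what $a \in (p:e)$ means, this yields $ax \leqslant p$, i.e.\ $a \in (p:x)$. Thus in either case one of $a,b$ lies in $(p:x)$, and together with properness this establishes that $(p:x)$ is prime. This final case is really the sole place the argument could stumble, and it is resolved cleanly by the inequality $x \leqslant e$ together with (M5)$^{\prime}$, which is the entire reason the definition of prime was phrased in terms of $(p:e)$ rather than an arbitrary element.
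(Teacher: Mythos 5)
Your proof is correct and follows essentially the same route as the paper: the paper likewise applies the definition of a prime submodule element to $a(bx) \leqslant p$ and then invokes the inclusion $(p:e) \subseteq (p:x)$, which is precisely the step you justify explicitly via $x \leqslant e$ and (M5)$^{\prime}$. Your restriction to the case $x \nleqslant p$ is in fact a small improvement in rigor, since the paper silently overlooks that $(p:x) = R$ whenever $x \leqslant p$, in which case $(p:x)$ is not a proper, hence not a prime, ideal.
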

\begin{proof}
We have $0 \in (p:x)$ and so $(p:x) \neq \emptyset$. Let $a,b \in (p:x)$. Then $ax \leqslant p$ and $ bx \leqslant p $ implies that $(a-b)x \leqslant ax+(-b)x \leqslant p+(-p) \leqslant p+p \leqslant p$ and so $ a-b \in (p:x)$. Also $(ra)x = r(ax) \leqslant rp \leqslant p$ implies that $ra \in (p:x)$. Thus $(p:x)$ is an ideal of $R$. Let $a,b \in R$ be such that $ab \in (p:x)$. Then $ (ab)x \leqslant p$, i.e. $a(bx) \leqslant p$. Since $p$ is a prime submodule element, so  $ a \in (p:e) \subseteq (p:x)$ or $ bx \leqslant p$, i.e. $ a \in (p:x) $ or $ b \in (p:x) $. Hence $(p:x)$ is a prime ideal of $R$.
\end{proof}
In particular we have:
\begin{corollary}     \label{prime ideal}
If $p$ is a prime submodule element of an le-module $_{R}M$, then $(p:e)$ is a prime ideal of $R$
\end{corollary}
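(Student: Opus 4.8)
The plan is to obtain this statement as the special case $x = e$ of the theorem proved immediately above. The only observation required is that $e$, being the greatest element of the lattice $M$, is in particular an element of $M$ and hence a legitimate choice for $x$. Since that theorem asserts that $(p:x)$ is a prime ideal of $R$ for \emph{every} $x \in M$, specializing $x$ to $e$ yields at once that $(p:e)$ is a prime ideal of $R$, which is exactly the claim. In this sense there is no real obstacle: the corollary is an immediate instantiation, which is why it is stated as ``in particular''.

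If one instead wanted a self-contained argument bypassing the full theorem, I would work directly from the definition of a prime submodule element, which is itself phrased in terms of $(p:e)$. First I would note that $(p:e)$ is an ideal of $R$, as already recorded for the general construction $(l:n)$. To verify primeness, take $a,b \in R$ with $ab \in (p:e)$, that is $(ab)e \leqslant p$, and rewrite this as $a(be) \leqslant p$ using (M3). Applying the defining property of the prime submodule element $p$ to the product $a \cdot (be)$ gives either $a \in (p:e)$ or $be \leqslant p$; in the latter case the inequality $be \leqslant p$ is precisely the statement $b \in (p:e)$. Hence $a \in (p:e)$ or $b \in (p:e)$, so $(p:e)$ is prime.

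To finish this direct route I would confirm that $(p:e)$ is a proper ideal using that $p$ is proper, i.e. $p \neq e$. Since $e$ is the greatest element we always have $p \leqslant e$, and $e \leqslant p$ would force $p = e$; as $p \neq e$ we get $e \nleqslant p$, that is $1_{R}\cdot e = e \nleqslant p$ by (M4), so $1_{R} \notin (p:e)$ and $(p:e) \neq R$. The only points needing a little care in this alternative are the identification of the membership $b \in (p:e)$ with the inequality $be \leqslant p$ and the properness check; neither is a genuine difficulty, which is precisely why the one-line specialization of the preceding theorem is the cleanest route.
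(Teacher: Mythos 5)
Your proposal is correct and its main route is exactly the paper's: the corollary is obtained by specializing the preceding theorem (``$(p:x)$ is a prime ideal for every $x \in M$'') to $x = e$, which is why the paper introduces it with ``In particular we have.'' Your supplementary self-contained argument is also sound, and it even adds a detail the paper leaves implicit: the properness check $1_{R} \notin (p:e)$ (using $p \neq e$), which is genuinely needed for $(p:e)$ to be a \emph{prime} ideal and which holds at $x=e$ even though it can fail in the general theorem (e.g.\ when $x \leqslant p$, where $(p:x)=R$).
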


The converse of the above corollary does not hold, in general. In the next theorem we prove that if, in addition, $p$ is assumed to be primary then the converse holds. This result also characterizes when a primary element becomes a prime submodule element.
\begin{theorem}     \label{prime submodule element}
Let $p$ be a proper submodule element of an le-module $_{R}M$. If $(p:e)$ is a prime ideal of $R$ and $p$ is primary, then $p$ is a prime submodule element.
\end{theorem}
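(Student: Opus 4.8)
The statement is the converse direction promised after Corollary~\ref{prime ideal}: under the extra hypothesis that $p$ is primary, primeness of the ideal $(p:e)$ forces $p$ itself to be a prime submodule element. The plan is to verify the defining implication of a prime submodule element directly, starting from an arbitrary relation $rn \leqslant p$ with $r \in R$ and $n \in M$, and showing it yields $r \in (p:e)$ or $n \leqslant p$.

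First I would invoke the primary hypothesis on $p$. Applying the definition of a primary element to $rn \leqslant p$ gives immediately one of two alternatives: either $n \leqslant p$, in which case the desired conclusion already holds and nothing further is needed, or $r^m e \leqslant p$ for some $m \in \mathbb{N}$, which by the definition of $(p:e)$ says precisely that $r^m \in (p:e)$.

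The remaining work is to convert $r^m \in (p:e)$ into $r \in (p:e)$, and this is exactly where the primeness of the ideal $(p:e)$ enters. Since $(p:e)$ is a prime ideal of $R$, it is radical, so $r^m \in (p:e)$ forces $r \in (p:e)$ (by induction on $m$, or directly since $r \cdot r^{m-1} \in (p:e)$ prime repeatedly). Thus the second alternative delivers $r \in (p:e)$, and combining the two cases shows that $rn \leqslant p$ always implies $r \in (p:e)$ or $n \leqslant p$. Since $p$ is proper by assumption, this is exactly the definition of a prime submodule element, completing the argument.

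I do not anticipate a genuine obstacle here: the proof is a short two-case analysis. The only point demanding care — and the conceptual heart of the result — is the passage from $r^m \in (p:e)$ to $r \in (p:e)$, which is valid solely because $(p:e)$ is assumed prime (equivalently, equal to its own radical). This is precisely the ingredient that fails without the primeness hypothesis, which is why the converse of Corollary~\ref{prime ideal} needs both that $(p:e)$ be prime and that $p$ be primary.
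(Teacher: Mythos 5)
Your proposal is correct and follows essentially the same argument as the paper: apply the primary hypothesis to $rn \leqslant p$ to obtain $r^m \in (p:e)$ in the nontrivial case, then use the fact that the prime ideal $(p:e)$ is radical to conclude $r \in (p:e)$. The only cosmetic difference is that the paper phrases it by assuming $n \nleqslant p$ at the outset rather than as a two-case split.
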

\begin{proof}
Let $rn \leqslant p$ and $n \nleqslant p$ for $r \in R$ and $n \in M$. Since $p$ is primary so $r^me \leqslant p$, for some $m \in \mathbb{N}$, i.e, $r^m \in (p:e)$. Since $(p:e)$ is a prime ideal of $R$ so $r \in (p:e)$. Consequently, $p$ is a prime submodule element.
\end{proof}

Also we have another characterization for the converse of Corollary \ref{prime ideal}.
\begin{proposition}
Let $p$ be a submodule element of an le-module $_{R}M$. If $(p:e)$ is a maximal ideal of $R$ then $p$ is a prime submodule element of $M$.
\end{proposition}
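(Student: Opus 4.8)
The plan is to verify the defining condition of a prime submodule element directly, imitating the argument used earlier for the proposition asserting that a submodule element whose radical is maximal is primary. First I would record that $p$ is automatically proper: since $(p:e)$ is maximal it is a proper ideal of $R$, so $1 \notin (p:e)$; as $1 \cdot e = e$, this forces $e \nleqslant p$, and since $p \leqslant e$ always holds, we get $p \neq e$.

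For the main verification, suppose $r \in R$ and $n \in M$ satisfy $rn \leqslant p$, and assume $r \notin (p:e)$; I must then show $n \leqslant p$. Writing $\mathfrak{m} = (p:e)$, maximality of $\mathfrak{m}$ together with $r \notin \mathfrak{m}$ gives $\langle \mathfrak{m} \cup \{r\} \rangle = R$, so there exist $u \in \mathfrak{m}$ and $s \in R$ with $1 = u + sr$. The crucial move is to rewrite $n$ through the identity element: by (M4), $n = 1 \cdot n = (u + sr)n$, and then (M2) yields $n = (u+sr)n \leqslant un + (sr)n$.

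It then remains to bound each summand above by $p$. Since $n \leqslant e$, property (M5$'$) gives $un \leqslant ue$, and $ue \leqslant p$ because $u \in \mathfrak{m} = (p:e)$; hence $un \leqslant p$. Likewise, using (M3) and (M5$'$), $(sr)n = s(rn) \leqslant sp \leqslant p$, the last inequality holding because $p$ is a submodule element. As $p + p = p$ (every submodule element is idempotent), I conclude $un + (sr)n \leqslant p + p = p$, and therefore $n \leqslant p$. This shows $rn \leqslant p$ implies $r \in (p:e)$ or $n \leqslant p$, so $p$ is a prime submodule element.

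The computation is short, and the only delicate point is the direction of the inequality in (M2): because the term $1 \cdot n = (u+sr)n$ sits on the left, (M2) bounds $n$ from above by $un + (sr)n$, which is exactly what is needed to push $n$ beneath $p$. An alternative and less self-contained route would be to note that $Rad(p) = Rad(p:e) = \mathfrak{m}$ (using $Rad(\mathfrak{m}) = \mathfrak{m}$ for the prime ideal $\mathfrak{m}$), invoke the earlier result that a submodule element with maximal radical is primary, and then apply Theorem \ref{prime submodule element} to the prime ideal $(p:e)$; but the direct argument above appears cleaner.
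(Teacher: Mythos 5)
Your proof is correct and takes essentially the same route as the paper's: both use maximality of $(p:e)$ to write $1 = u + sr$ with $u \in (p:e)$, then bound $n = (u+sr)n \leqslant un + (sr)n \leqslant p + p = p$ via (M2) and idempotence of $p$. Your preliminary check that $p$ is proper (required by the definition of a prime submodule element) is a small point the paper's proof silently omits, and is a worthwhile addition.
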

\begin{proof}
Let $a \in R$ and $n \in M$ be such that $an \leqslant p$. If possible, let $a \notin (p:e)$, i.e, $ae \nleqslant p$. Then $\langle (p:e)\cup \{a\}\rangle = R$, since $(p:e)$ is maximal. Since $1 \in R$, $1 = r+sa$, for some $r \in (p:e)$ and $s \in R$. Also $rn \leqslant re \leqslant p$, since $r \in (p:e)$. Now $n = (r+sa)n \leqslant rn+san \leqslant p+p = p$. Hence $p$ is a prime submodule element.
\end{proof}

\section{Primary decomposition in Laskerian le-module}
An $R$-module $M$ is called Laskerian if each submodule is a finite intersection of primary submodules. Many authors assumed Laskerian modules to be finitely generated. For further information on Laskerian modules readers are referred to \cite{Gilmer and Heinzer, Heinzer and Lantz, Heinzer and Ohm, Jayaram and Tekir, Radu, Visweswaran}.

Let $_{R}M$ be an le-module and $n$ be a submodule element of $M$. Then $n$ is said to have a primary decomposition if there exist primary elements $q_{1},q_{2},\cdots,q_{k}$ of $M$ such that
\begin{center}
$n = q_{1} \wedge q_{2} \wedge \cdots \wedge q_{k}$.
\end{center}

A primary decomposition is called \emph{reduced} if:
\begin{enumerate}
\item[(i)] $q_{1} \wedge q_{2} \wedge \cdots \wedge q_{i-1} \wedge q_{i+1} \wedge \cdots \wedge q_{k} \nleqslant q_{i}$, for all $i = 1,2, \cdots, k$.
\item[(ii)] Rad$(q_{i}) \neq Rad(q_{j})$ for $i \neq j$.
\end{enumerate}

An le-module $_{R}M$ is said to be \emph{Laskerian} if every submodule element of $_{R}M$ has a primary decomposition.

Throughout the rest of the article, every le-module $_{R}M$ is Laskerian.
It is easy to observe that if a submodule element $n$ of an le-module $_{R}M$ has a primary decomposition then it has a reduced primary decomposition.

\begin{theorem}
Let $n$ be a proper submodule element of an le-module $M$ which has a reduced primary decomposition $n = q_{1} \wedge q_{2}\wedge \cdots \wedge q_{k}$. Then every $q_{i}$ is a prime submodule element if and only if $Rad(n) = (n:e)$.
\end{theorem}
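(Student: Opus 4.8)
The plan is to translate the statement entirely into the ideal theory of $R$ via the operator $(\,\cdot:e)$, and then invoke the earlier characterization of prime submodule elements among primary ones. Write $P_i=Rad(q_i)$; since each $q_i$ is primary these are prime ideals, and by the reduced condition (ii) they are pairwise distinct. Two facts organize everything. First, by the proposition giving $(\wedge_i n_i:e)=\cap_i(n_i:e)$ together with Proposition \ref{Radical of q}(iii), one has $(n:e)=\cap_{i=1}^{k}(q_i:e)$ and $Rad(n)=\cap_{i=1}^{k}P_i$. Second, since each $q_i$ is a proper primary element and $e\nleqslant q_i$ (because $q_i\neq e$ while $q_i\leqslant e$), Theorem \ref{(q:n) is primary}(ii) applied with the element $e$ shows that each $(q_i:e)$ is a $P_i$-primary ideal of $R$; in particular $(q_i:e)\subseteq Rad(q_i:e)=P_i$.

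The whole theorem then reduces to the equivalence: every $q_i$ is a prime submodule element if and only if $(q_i:e)=P_i$ for every $i$. Indeed, if $q_i$ is prime then $(q_i:e)$ is a prime ideal by Corollary \ref{prime ideal}, whence $(q_i:e)=Rad(q_i:e)=P_i$; conversely, if $(q_i:e)=P_i$ then $(q_i:e)$ is prime and $q_i$ is primary, so $q_i$ is a prime submodule element by Theorem \ref{prime submodule element}. Granting this reduction, the forward implication is immediate: if every $q_i$ is prime then $(q_i:e)=P_i$ for all $i$, so $(n:e)=\cap_i(q_i:e)=\cap_i P_i=Rad(n)$.

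The substance lies in the converse, where from $\cap_i(q_i:e)=(n:e)=Rad(n)=\cap_i P_i$ I must recover $(q_i:e)=P_i$ for each $i$. I would first dispose of any $P_i$ that is minimal among $\{P_1,\dots,P_k\}$: by distinctness no other $P_j$ is contained in $P_i$, so by prime avoidance I can choose $s\in(\cap_{j\neq i}P_j)\setminus P_i$, and after replacing $s$ by a suitable power $s^N$ I may assume $s^N\in(q_j:e)$ for all $j\neq i$. For any $a\in P_i$ the element $as^N$ then lies in every $P_l$, hence in $\cap_l P_l=(n:e)=\cap_l(q_l:e)\subseteq(q_i:e)$; since $s^N\notin P_i=Rad(q_i:e)$ and $(q_i:e)$ is $P_i$-primary, this forces $a\in(q_i:e)$, giving $P_i\subseteq(q_i:e)$ and so $(q_i:e)=P_i$.

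The remaining and genuinely delicate point is to handle the non-minimal (embedded) radicals, for which the localization/avoidance argument breaks down: at the purely ring-theoretic level the implication is false, since a redundant component such as in $(X)=(X)\cap(X,Y^2)$ over $k[X,Y]$ has $(q_i:e)\subsetneq P_i$ while the intersection is still radical. Here I expect to have to use the reduced hypotheses (i)--(ii) on $n=q_1\wedge\cdots\wedge q_k$ to argue that the induced ideal decomposition $(n:e)=\cap_i(q_i:e)$ is itself irredundant. Since $(n:e)$ equals the radical ideal $\cap_i P_i$, and a radical ideal admits no embedded primes, an irredundant primary decomposition of it can only involve minimal primes with the primes themselves as components; matching this against $\cap_i(q_i:e)$ then yields $(q_i:e)=P_i$ for all $i$ and closes the argument. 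Establishing this irredundancy from the lattice-theoretic reduced conditions is the main obstacle, and is the one step where the proof must do real work beyond formal manipulation.
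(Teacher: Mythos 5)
Your forward direction is correct, and so is your reduction of the whole theorem to the assertion that $(q_i:e)=P_i$ for every $i$; this repackages the paper's forward argument (the paper instead peels $a^{m}e\leqslant q_i$ down to $ae\leqslant q_i$ using primeness of each $q_i$, but the two routes are interchangeable). Your minimal-prime argument in the converse is also correct, and it is in essence the paper's \emph{entire} converse carried out carefully: for each $i$ the paper chooses $b\in\bigcap_{j\neq i}P_j$ with $b\notin P_i$, notes that $ab\in\bigcap_j P_j=Rad(n)=(n:e)\subseteq(q_i:e)$ for $a\in P_i$, uses primariness of $q_i$ to conclude $P_i\subseteq (q_i:e)$, and then invokes Theorem \ref{prime submodule element}.

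The obstacle you isolate in your final paragraph, however, is not something more work could remove: it is a genuine gap in the theorem itself, and the paper's proof commits precisely the step you refused to take. Reducedness gives only $P_i\neq P_j$, not incomparability, so when $P_j\subsetneq P_i$ the element $b\in(\bigcap_{j\neq i}P_j)\setminus P_i$ the paper asserts to exist does not exist. Moreover, your hope that the reduced conditions on $n=q_1\wedge\cdots\wedge q_k$ in $M$ force the induced ideal decomposition $(n:e)=\bigcap_i(q_i:e)$ to be irredundant is false. Take $R=k[X,Y]$ and let $M$ be the complete lattice of submodules of the $R$-module $V=R/(X)\oplus R/(X,Y^2)$, with sum as $+$ and $\vee$, intersection as $\wedge$, $e=V$, and $rN=\{rv:v\in N\}$; this is an le-module, every element of it is a submodule element, and its prime (resp.\ primary) elements are exactly the classical prime (resp.\ primary) submodules (pass between elements $v$ and cyclic submodules $Rv$ for the dictionary). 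Put $q_1=0\oplus R/(X,Y^2)$, which is $(X)$-primary, and $q_2=R/(X)\oplus 0$, which is $(X,Y)$-primary. Then $n=0_M=q_1\wedge q_2$ is a reduced primary decomposition (neither component lies below the other, and the radicals differ), and $(n:e)=\operatorname{Ann}(V)=(X)\cap(X,Y^2)=(X)=Rad(n)$ --- exactly your ring-level example, realized by a genuinely reduced le-module decomposition. Yet $q_2$ is not a prime submodule element: with $x=R\cdot(0,\overline{Y})$ one has $Yx=0\leqslant q_2$ and $x\nleqslant q_2$, but $Y\notin(q_2:e)=(X,Y^2)$. So the ``if'' direction of the theorem fails whenever embedded primes occur. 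Both your argument and the paper's become valid under the additional hypothesis that $P_1,\dots,P_k$ are pairwise incomparable (e.g.\ when every associated prime of $n$ is isolated), and in that case your prime-avoidance argument does complete the proof; without such a hypothesis, no proof can exist.
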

\begin{proof}
First assume that each $q_{i}$ is a prime submodule element. We have $(n:e) \subseteq Rad(n)$. Now let $a \in Rad(n)$. Then $a^me \leqslant n =\wedge q_{i}$, for some $m \in \mathbb{N}$. Therefore $a^me \leqslant q_{i}$, for each $i$. Since $q_{i}$ is prime so $ae \leqslant q_{i}$ or $a^{m-1}e \leqslant q_{i}$, for each $i$. If $ae \leqslant q_{i}$ then $a \in (q:e)$. Otherwise $a^{m-1} \leqslant q_{i}$ implies $ae \leqslant q_{i}$ or $a^{m-2}e \leqslant q_{i}$. Continuing in this way we get $a \in (q_{i}:e)$, for each $i$. Thus $a \in (q_{1}:e) \cap (q_{2}:e) \cap \cdots \cap (q_{k}:e)$, i.e, $a \in (n:e)$. Hence $Rad(n) = (n:e)$.

Conversely assume that $Rad(n) = (n:e)$. Let $Rad(q_{i}) = P_{i}$. We claim that $Rad(q_{i}) = (q_{i}:e)$. Let $a \in P_{i}$. Since $\cap_{i=1}^{k}P_{i}$ is reduced there exists $b \in \cap P_{j}$ but $b \notin P_{i}$ in $R$. Now $ab \in \cap_{i=1}^{k}P_{i} = \cap_{i=1}^{k}Rad(q_{i}) = Rad(n) = (n:e) = \cap_{i=1}^{k}(q_{i}:e)$ implies $abe \leqslant q_{i}$, for each $i$. Since $q_{i}$ is primary and $b \notin P_{i}$, so $ae \leqslant q_{i}$, i.e, $a \in (q_{i}:e)$. Consequently, $P_{i} = (q_{i}:e)$. Hence by Theorem \ref{prime submodule element}, each $q_{i}$ is a prime submodule element.
\end{proof}

Let $I$ be an ideal of a ring $R$. Recall that a prime ideal $P$ of $R$ is called a \emph{minimal prime divisor} of $I$ if $I \subseteq P$ and if there is no prime ideal $P'$ of $R$ such that $I \subset P' \subset P$.

Let $n$ be a submodule element of $M$. Then the minimal prime divisors of $(n:e)$ are called \emph{minimal prime divisors} of $n$. Immediately we have
\begin{center}
Rad$(n) = \cap P$
\end{center}
where the intersection is over all minimal prime divisors of $n$. We omit the proof since it is similar to the analogous result in the ideal theory of rings.

Let $n = q_{1} \wedge q_{2} \wedge \cdots \wedge q_{m}$ be a reduced primary decomposition of a submodule element $n$ of $_{R}M$ and let $P_{i} = Rad(q_{i})$, for $i = 1,2,\cdots,m$. The prime ideals $P_{1},P_{2},\cdots,P_{m}$ are called \emph{prime divisors} of $n$ or \emph{associated primes} of $n$.
\begin{theorem}
Let $n$ be a proper submodule element of an le-module $M$ which has a reduced primary decomposition $n = q_{1} \wedge q_{2}\wedge \cdots \wedge q_{k}$ and $P_{i} = Rad(q_{i})$ be the associated prime ideals of $n$. Let $P$ be a prime ideal of $R$ then $(n:e) \subseteq P$ if and only if $P_{i} \subseteq P$ for some $i = 1,2, \cdots ,k$.
\end{theorem}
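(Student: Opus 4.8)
The plan is to reduce both implications to the single identity $Rad(n) = \bigcap_{i=1}^{k} P_{i}$, which follows at once from Proposition \ref{Radical of q}(iii) applied to the decomposition $n = q_{1} \wedge \cdots \wedge q_{k}$, together with the definitions $P_{i} = Rad(q_{i})$ and $Rad(n) = Rad(n:e)$. Once this identity is in hand, both directions become statements about radical ideals in the ring $R$, where the classical behaviour of prime ideals can be invoked directly.

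For the implication from right to left, I would suppose that $P_{i} \subseteq P$ for some $i$. Since $(n:e) \subseteq Rad(n:e) = Rad(n) = \bigcap_{j=1}^{k} P_{j}$ and the intersection is contained in each of its factors, I would simply chain the containments $(n:e) \subseteq \bigcap_{j=1}^{k} P_{j} \subseteq P_{i} \subseteq P$, which yields the desired conclusion with no further work.

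For the forward implication, assume $(n:e) \subseteq P$. First I would pass to radicals: if $a \in Rad(n) = Rad(n:e)$, then $a^{m} \in (n:e) \subseteq P$ for some $m \in \mathbb{N}$, and since $P$ is prime this forces $a \in P$; hence $\bigcap_{i=1}^{k} P_{i} = Rad(n) \subseteq P$. It then remains to deduce that $P$ contains at least one of the $P_{i}$. For this I would argue by contradiction using the standard fact that a prime ideal containing a finite intersection of ideals contains one of them: if $P_{i} \not\subseteq P$ for every $i$, choose $x_{i} \in P_{i} \setminus P$; then the product $x_{1}x_{2}\cdots x_{k}$ lies in every $P_{i}$, hence in $\bigcap_{i=1}^{k} P_{i} \subseteq P$, yet none of its factors lies in $P$, contradicting the primeness of $P$.

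The argument is largely routine, and the only point requiring genuine care is this last step, namely the passage from $\bigcap_{i=1}^{k} P_{i} \subseteq P$ to $P_{i} \subseteq P$ for some $i$. This is precisely where both the finiteness of the decomposition and the primeness of $P$ are essential, and it is the faithful analogue of the corresponding statement in the ideal theory of commutative rings.
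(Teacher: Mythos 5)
Your proposal is correct and follows essentially the same route as the paper: both directions reduce to the standard fact that a prime ideal containing a finite intersection of ideals must contain one of them, combined with the intersection identities already established in the paper. The only (harmless) difference is that you pass to radicals first and apply that fact to the primes $P_{i}$ via $Rad(n) = \bigcap_{i=1}^{k} P_{i}$, whereas the paper applies it to the primary ideals $(q_{i}:e)$ via $(n:e) = \bigcap_{i=1}^{k}(q_{i}:e)$ and then invokes that $P_{i}$ is the smallest prime containing $(q_{i}:e)$; your write-up also supplies the product-of-elements proof of the prime-intersection fact, which the paper leaves implicit.
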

\begin{proof}
Let  $(n:e) \subseteq P$. Then, by Proposition \ref{meet}, $\cap_{i=1}^{k}(q_{i}:e) \subseteq P$ which implies that $(q_{i}:e) \subseteq P$, for some $i$. Now $P_{i}$ is the smallest prime ideal containing $(q_{i}:e)$, hence $P_{i} \subseteq P$, for some $i$. Conversely suppose $P_{j} \subseteq P$, for some $j$. Then $(n:e) = \cap_{i=1}^{k}(q_{i}:e) \subseteq \cap_{i=1}^{k}Rad(q_{i}) \subseteq Rad(q_{j}) = P_{j} \subseteq P$.
\end{proof}
\begin{corollary}
Let $n$ be a submodule element of an le-module $M$. Then every minimal prime divisor of $n$ is a prime divisor of $n$ and is minimal in the set of prime divisors of $n$.
\end{corollary}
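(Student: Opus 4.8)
The plan is to deduce the corollary directly from the preceding theorem, which says that for a prime ideal $P$ of $R$ one has $(n:e) \subseteq P$ if and only if $P_i \subseteq P$ for some associated prime $P_i$. Since $M$ is Laskerian, I fix a reduced primary decomposition $n = q_1 \wedge q_2 \wedge \cdots \wedge q_k$ and write $P_i = Rad(q_i)$. Before applying the theorem I would record the key auxiliary observation that \emph{every} associated prime already contains $(n:e)$: by Proposition \ref{meet}(vii) (taken with $k = e$ and extended to a finite meet by induction) we have $(n:e) = \cap_{j=1}^{k}(q_j:e) \subseteq (q_i:e) \subseteq Rad(q_i) = P_i$ for each $i$. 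Thus the prime divisors of $n$ form a subcollection of the prime ideals of $R$ containing $(n:e)$, which is exactly the setting in which the minimality hypothesis on a minimal prime divisor can be exploited.

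First I would prove that a minimal prime divisor $P$ of $n$ is in fact a prime divisor. By definition $(n:e) \subseteq P$ and $P$ is minimal among prime ideals containing $(n:e)$. The preceding theorem then supplies an index $i$ with $P_i \subseteq P$, while the auxiliary fact gives $(n:e) \subseteq P_i$. Hence $P_i$ is a prime ideal satisfying $(n:e) \subseteq P_i \subseteq P$, and the minimality of $P$ forces $P_i = P$. Therefore $P$ equals the associated prime $P_i$; that is, $P$ is a prime divisor of $n$.

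Finally, the minimality of $P$ inside the set of prime divisors is immediate from the same auxiliary fact. If $P_j$ is any prime divisor with $P_j \subseteq P$, then $P_j$ is a prime ideal of $R$ with $(n:e) \subseteq P_j \subseteq P$, so once more the minimality of $P$ among primes containing $(n:e)$ yields $P_j = P$. I do not expect a genuine obstacle in this argument, since the substantive content is already carried by the preceding theorem; the one point that genuinely needs care is the auxiliary observation $(n:e) \subseteq P_i$ for every $i$, because it is precisely this inclusion that allows the minimality condition on $P$ — stated for all primes over $(n:e)$ — to be brought to bear on the finitely many associated primes $P_1,\dots,P_k$.
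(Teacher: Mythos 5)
Your proof is correct and takes exactly the route the paper intends: the corollary is stated there without proof as an immediate consequence of the preceding theorem, and your argument supplies precisely the missing deduction. The one step that needs checking, namely the auxiliary inclusion $(n:e) = \bigcap_{j}(q_{j}:e) \subseteq (q_{i}:e) \subseteq Rad(q_{i}) = P_{i}$ for every associated prime, is verified correctly, and it is indeed what lets the minimality of $P$ among primes containing $(n:e)$ collapse the chain $(n:e) \subseteq P_{i} \subseteq P$ in both halves of the statement.
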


\begin{theorem} [1st Uniqueness Theorem]
Let $n$ be a submodule element of an le-module $_{R}M$ and assume that $n$ has a reduced primary decomposition $n = q_{1} \wedge q_{2} \wedge \cdots \wedge q_{r}$. Let $P$ be a prime ideal of $R$. Then $P = Rad(q_{i})$, for some $i$ if and only if $(n:x)$ is a $P$-primary ideal of $R$, for some $x \nleqslant n$. Hence the set of all associated primes is independent of primary decomposition of $n$.
\end{theorem}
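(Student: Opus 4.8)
The plan is to follow the classical ideal-theoretic argument for the first uniqueness theorem, after first establishing the key identity expressing $(n:x)$ as a finite intersection of the ideals $(q_i:x)$. For any $x \in M$, iterating Proposition \ref{meet}(vii) over the finite meet gives
\[
(n:x) = \Big(\bigwedge_{i=1}^{r} q_i : x\Big) = \bigcap_{i=1}^{r}(q_i:x).
\]
I would then split the index set according to whether $x \leqslant q_i$ or $x \nleqslant q_i$ and invoke Theorem \ref{(q:n) is primary}: when $x \leqslant q_i$ the factor $(q_i:x)=R$ drops out, and when $x \nleqslant q_i$ the factor $(q_i:x)$ is $P_i$-primary, so its radical is $P_i$. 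Using that the radical of a finite intersection of ideals in $R$ is the intersection of their radicals, this yields the crucial formula
\[
Rad(n:x) = \bigcap_{\,x \nleqslant q_i} P_i ,
\]
where the index set on the right is nonempty precisely when $x \nleqslant n$ (if $x \leqslant q_i$ for all $i$ then $x \leqslant n$).

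For the forward direction, suppose $(n:x)$ is $P$-primary for some $x \nleqslant n$. Then $P = Rad(n:x) = \bigcap_{x \nleqslant q_i} P_i$, so $P$ is a prime ideal equal to a finite intersection of the primes $P_i$. Since $P$ then contains the product $\prod_{x \nleqslant q_i} P_i \subseteq \bigcap_{x \nleqslant q_i} P_i = P$, primeness forces $P \supseteq P_j$ for some $j$; combined with $P \subseteq P_j$ this gives $P = P_j = Rad(q_j)$.

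For the converse, given $P = Rad(q_i) = P_i$, I would produce a witness directly from the reducedness hypothesis by taking $x = \bigwedge_{j \neq i} q_j$ (itself a submodule element). Condition (i) of reducedness says exactly $x \nleqslant q_i$, and since $n \leqslant q_i$ this forces $x \nleqslant n$. On the other hand $x \leqslant q_j$ for every $j \neq i$, so in the intersection formula only the $i$-th factor survives: $(n:x) = (q_i:x)$, which by Theorem \ref{(q:n) is primary} is $P_i$-primary. This exhibits the required $x$ and completes the equivalence. The final ``Hence'' is then immediate, since the displayed characterization describes the set $\{P_1,\dots,P_r\}$ purely in terms of $n$ -- as the primes $P$ for which some $(n:x)$, $x \nleqslant n$, is $P$-primary -- with no reference to the chosen decomposition.

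The step I expect to require the most care is the bookkeeping behind the intersection formula: confirming that Proposition \ref{meet}(vii) extends cleanly to an arbitrary finite meet of submodule elements, and that the radical genuinely commutes with the finite intersection of the ideals $(q_i:x)$ in $R$. The two implications are short once this is in place; the only genuinely structural input is the translation of the lattice-theoretic reducedness condition (i) into the existence of a single witness $x \nleqslant n$, which the choice $x = \bigwedge_{j\neq i} q_j$ handles.
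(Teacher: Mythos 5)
Your proposal is correct and follows essentially the same route as the paper's own proof: both rest on the identity $(n:x) = \bigcap_{i}(q_i:x)$, both invoke Theorem \ref{(q:n) is primary} to discard the factors with $x \leqslant q_i$ and identify the remaining ones as $P_i$-primary, and both use the witness $x = \bigwedge_{j \neq i} q_j$ supplied by reducedness. The only difference is cosmetic: you spell out, via the product-of-primes argument, the step where a prime ideal equal to a finite intersection of primes must coincide with one of them, which the paper leaves implicit.
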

\begin{proof}
Let $P_{i} = Rad(q_{i})$, for $i=1,2,\cdots,m$. Without loss of generality, we assume that $P = P_{1}$. Since the decomposition is reduced, in particular $q_{2} \wedge q_{3} \wedge \cdots \wedge q_{m} \nleqslant q_{1}$. Let $x = q_{2} \wedge q_{3} \wedge \cdots \wedge q_{m}$. Then $x \nleqslant n$ and $(n:x) = (q_{1} \wedge q_{2} \wedge \cdots \wedge q_{m}:x) = (q_{1}:x) \cap (q_{2}:x) \cap \cdots \cap (q_{m}:x)$. By Theorem \ref{(q:n) is primary}, $(q_{i}:x) = R$, for $i=2,3,\cdots,m$ and whence $(n:x) = (q_{1}:x)$. Thus $(n:x)$ is a $P$-primary ideal of $R$, by Theorem \ref{(q:n) is primary}.

Conversely suppose that $(n:x)$ is a $P$-primary ideal of $R$, for some $x \nleqslant n$. Now we have $P = Rad(n:x) = Rad(q_{1}:x) \cap Rad(q_{2}:x) \cap \cdots \cap Rad(q_{m}:x)$. Now $x \nleqslant n$ implies that $x \nleqslant q_{i}$, for some $i$ say $x \nleqslant q_{k}$; $1 \leqslant k \leqslant m$. Then by the fact that $(q_{i}:x) \subseteq Rad(q_{i}:x)$ and by Theorem \ref{(q:n) is primary}, we have, for each $i$, $Rad(q_{i}:x) = P_{i}$ or $R$, and is equal to $P_{i}$ for $i = k$. Hence $P$ is the intersection of some prime ideals $P_{i_{1}},P_{i_{2}},\cdots,P_{i_{h}}$ ($1 \leqslant h \leqslant m$).i.e, $P = P_{i_{1}} \cap P_{i_{2}} \cap \cdots \cap P_{i_{h}}$. Thus $P = P_{i}$, for some $i$.
\end{proof}

The following consequence is immediate.
\begin{corollary}
Let $n$ be a submodule element of an le-module $_{R}M$ and assume that $n$ has a primary decomposition. If
\begin{center}
$n = q_{1} \wedge q_{2} \wedge \cdots \wedge q_{r} = q_{1}' \wedge q_{2}' \wedge \cdots \wedge q_{s}'$
\end{center}
are two reduced primary decompositions of $n$, then $r = s$ and the $q_{i}$ and $q_{i}'$ can be so numbered that $Rad(q_{i}) = Rad (q_{i}')$ for $i = 1,2,\cdots,r$.
\end{corollary}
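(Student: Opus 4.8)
The plan is to read off this corollary directly from the 1st Uniqueness Theorem, whose closing assertion is exactly that the set of associated primes of $n$ has a description not referring to any chosen decomposition. First I would isolate the decomposition-free object
\[
S = \{P \subseteq R : P \text{ is prime and } (n:x) \text{ is } P\text{-primary for some } x \nleqslant n\},
\]
which depends on $n$ alone. Applying the 1st Uniqueness Theorem to the decomposition $n = q_{1} \wedge \cdots \wedge q_{r}$ shows that $P = Rad(q_{i})$ for some $i$ if and only if $P \in S$; that is, $\{Rad(q_{1}), \ldots, Rad(q_{r})\} = S$. Applying the same theorem to the second reduced decomposition $n = q_{1}' \wedge \cdots \wedge q_{s}'$ gives $\{Rad(q_{1}'), \ldots, Rad(q_{s}')\} = S$ as well. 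Thus the two unordered sets of radicals coincide.

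The key point, and the only place the full force of ``reduced'' is needed, is the passage from equal sets of radicals to equal numbers of components. Condition (ii) in the definition of a reduced primary decomposition forces $Rad(q_{1}), \ldots, Rad(q_{r})$ to be pairwise distinct, so this list enumerates $S$ without repetition and hence $r = |S|$. The identical argument applied to the primed decomposition gives $s = |S|$, and therefore $r = s$. I would stress that without the distinctness clause the counts $r$ and $s$ could legitimately differ even though the sets of radicals agree, so invoking (ii) is essential here.

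Finally, since $\{Rad(q_{i})\}_{i=1}^{r}$ and $\{Rad(q_{j}')\}_{j=1}^{s}$ are the same $r$-element set $S$, the map sending each $q_{i}$ to the unique $q_{j}'$ with $Rad(q_{j}') = Rad(q_{i})$ is a well-defined bijection between the index sets; reindexing the primed components along this bijection yields $Rad(q_{i}) = Rad(q_{i}')$ for $i = 1, \ldots, r$. There is no genuinely hard step in this argument: all of the mathematical substance is supplied by the 1st Uniqueness Theorem, and the corollary amounts to combining that theorem's decomposition-independent characterization of $S$ with the elementary counting afforded by distinctness of radicals in a reduced decomposition.
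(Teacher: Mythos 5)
Your proof is correct and follows exactly the route the paper intends: the paper states this corollary as an immediate consequence of the 1st Uniqueness Theorem, and your argument simply makes that explicit by noting that both sets of radicals equal the decomposition-independent set of associated primes, with condition (ii) of reducedness supplying distinctness and hence the count $r = s$. Nothing is missing; you have just written out the details the paper leaves to the reader.
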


Let $S$ be a multiplicatively closed set in $R$ and $n$ be a submodule element of an le-module $M$. We define
\begin{center}
$n_{S} = \vee \{x \in M: sx \leqslant n$, for some $s \in S\}$.
\end{center}
Note that if $(n:e) \cap S \neq \emptyset$ then $n_{S} = e$. Thus if $0 \in S$, then $n_{S} = e$.
If $n$ is a submodule element of $M$ then $n_{S}$ is a submodule element of $M$. We call the submodule element $n_{S}$ the \emph{$S$-component} of $n$.

\begin{proposition}     \label{S component}
Let $_{R}M$ be an le-module and $n$ be a submodule element $M$ having a primary decomposition $n = q_{1} \wedge q_{2} \wedge \cdots \wedge q_{m}$ where $q_{i}$ are $P_{i}$-primary. If $S$ is a multiplicatively closed set in $R$ such that $P_{i} \cap S = \emptyset$, for $i = 1,2,\cdots,k$ and $P_{i} \cap S \neq \emptyset$, for the remaining $i$ then $n_{S} = q_{1} \wedge q_{2} \wedge \cdots \wedge q_{k}$.
\end{proposition}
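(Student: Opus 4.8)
The plan is to establish $n_S = q_1 \wedge \cdots \wedge q_k$ by proving the two inequalities separately, handling the ``good'' components (those $q_i$ with $P_i \cap S = \emptyset$) and the ``bad'' components (those with $P_i \cap S \neq \emptyset$) by different mechanisms. Throughout I would work directly with the definition $n_S = \vee \{x \in M : sx \leqslant n \text{ for some } s \in S\}$, keeping in mind that $n_S$ is a join over an entire family, not a single element.

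For the inequality $n_S \leqslant q_1 \wedge \cdots \wedge q_k$, I would show that every element $x$ appearing in the join already lies below each of $q_1, \ldots, q_k$. Indeed, if $sx \leqslant n \leqslant q_i$ with $s \in S$ and $1 \leqslant i \leqslant k$, then since $q_i$ is primary either $x \leqslant q_i$ or $s^m e \leqslant q_i$ for some $m \in \mathbb{N}$; the latter forces $s^m \in (q_i : e)$ and hence $s \in Rad(q_i) = P_i$, contradicting $P_i \cap S = \emptyset$. Thus $x \leqslant q_i$ for each $i = 1, \ldots, k$, so $x \leqslant q_1 \wedge \cdots \wedge q_k$, and taking the join over all admissible $x$ gives $n_S \leqslant q_1 \wedge \cdots \wedge q_k$.

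The reverse inequality $q_1 \wedge \cdots \wedge q_k \leqslant n_S$ is where the real work lies, and it is the step I expect to be the main obstacle. Write $y = q_1 \wedge \cdots \wedge q_k$; first I would note that $y$ is itself a submodule element (a meet of submodule elements is one, using monotonicity (M5)$^{\prime}$ and idempotency of submodule elements), so that $ry \leqslant y$ for every $r \in R$. The goal is to exhibit a single $s \in S$ with $sy \leqslant n = q_1 \wedge \cdots \wedge q_m$, which places $y$ among the elements defining $n_S$ and hence gives $y \leqslant n_S$. For the good components $i \leqslant k$ this is automatic, since $sy \leqslant y \leqslant q_i$ for any choice of $s$. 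For each bad component $i > k$, choose $s_i \in P_i \cap S$; because $s_i \in P_i = Rad(q_i)$ there is $m_i \in \mathbb{N}$ with $s_i^{m_i} e \leqslant q_i$. The key idea is to take the single multiplier $s = \prod_{j = k+1}^{m} s_j^{m_j}$, which lies in $S$ since $S$ is multiplicatively closed. Then for each bad $i$, factoring $s = s_i^{m_i} t_i$ and using $y \leqslant e$, monotonicity (M5)$^{\prime}$, associativity (M3), and the submodule property of $q_i$, one gets $sy \leqslant se = t_i(s_i^{m_i} e) \leqslant t_i q_i \leqslant q_i$. Combining the two cases yields $sy \leqslant \wedge_{i=1}^{m} q_i = n$, so $y \leqslant n_S$.

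Putting the two inequalities together gives $n_S = q_1 \wedge \cdots \wedge q_k$. The delicate point is precisely the asymmetry forced by $n_S$ being a join of a whole family: the forward inequality must be argued elementwise over that join and leans on the primary property of each good $q_i$, whereas the reverse inequality must produce one uniform $s \in S$ that simultaneously dominates all the bad components. It is this simultaneous choice of $s$ — resting on $S$ being multiplicatively closed and on each $P_i$ being the radical of $q_i$ — that is the crux of the argument.
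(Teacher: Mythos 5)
Your proposal is correct and follows essentially the same route as the paper's proof: the forward inequality is argued elementwise over the join defining $n_S$ using the primary property of each $q_i$ with $P_i \cap S = \emptyset$, and the reverse inequality is obtained by exhibiting a single $s \in S$, built as a product of elements $s_j \in P_j \cap S$ raised to suitable powers, with $s\,(q_1 \wedge \cdots \wedge q_k) \leqslant n$. The only cosmetic differences are that you take $s = \prod_{j>k} s_j^{m_j}$ with tailored exponents where the paper uses $(s_{k+1}\cdots s_m)^r$ for one large $r$, and you invoke the fact that the meet $q_1 \wedge \cdots \wedge q_k$ is itself a submodule element where the paper distributes the scalar over the meet; the paper also notes the trivial case $k=m$ separately, which your empty-product convention covers.
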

\begin{proof}
Let $T = \{x \in M: sx \leqslant n$, for some $s \in S\}$. Then $n_{S} = \vee T$. Let $x \in T$. Then there exists $s \in S$ such that $sx \leqslant n = \wedge_{i=1}^{m} q_{i} \leqslant q_{i}$, for $i = 1,2,\cdots,m$. Since $P_{i} \cap S = \emptyset$, $s \notin P_{i}$ and hence $x \leqslant q_{i}$, for $i=1,2,\cdots,k$. It follows that $n_{S} = \vee T \leqslant q_{1} \wedge q_{2} \wedge \cdots \wedge q_{k}$.

Now denote $q = q_{1} \wedge q_{2} \wedge \cdots \wedge q_{k}$. If $k = m$, then $q = n \in T$ implies that $q_{1} \wedge q_{2} \wedge \cdots \wedge q_{k} = q \leqslant \vee T = n_{S}$. Thus $n_{S} = q_{1} \wedge q_{2} \wedge \cdots \wedge q_{k}$. If $k \neq m$, then for $j=k+1,k+2,\cdots,m$ choose $s_{j} \in P_{j} \cap S$ and so $s_{j}^{r_{j}} e \leqslant q_{j}$ for some $r_{j} \in \mathbb{N}$. Then for large enough $r \in \mathbb{N}$, we have
\begin{center}
$(s_{k+1} \cdot s_{k+2} \cdots s_{m})^r e \leqslant q_{k+1} \wedge q_{k+2} \wedge \cdots \wedge q_{m}$
\end{center}
and hence $(s_{k+1} \cdot s_{k+2} \cdots s_{m})^r q \leqslant (s_{k+1} \cdot s_{k+2} \cdots s_{m})^r e \leqslant q_{k+1} \wedge q_{k+2} \wedge \cdots \wedge q_{m}$. Also $(s_{k+1} \cdot s_{k+2} \cdots s_{m})^r q = (s_{k+1} \cdot s_{k+2} \cdots s_{m})^r (q_{1} \wedge q_{2} \wedge \cdots \wedge q_{k}) \leqslant (s_{k+1} \cdot s_{k+2} \cdots s_{m})^rq_{1} \wedge (s_{k+1} \cdot s_{k+2} \cdots s_{m})^rq_{2} \wedge \cdots \wedge (s_{k+1} \cdot s_{k+2} \cdots s_{m})^rq_{k} \leqslant q_{1} \wedge q_{2} \wedge \cdots \wedge q_{k}$, since $S$ is multiplicatively closed and $q_{i}$'s are submodule elements. Thus $(s_{k+1} \cdot s_{k+2} \cdots s_{m})^r q \leqslant q_{1} \wedge q_{2} \wedge \cdots \wedge q_{m} = n$ which implies that $q \in T$ and so $q \leqslant n_{S}$. Therefore $q_{1} \wedge q_{2} \wedge \cdots \wedge q_{k} \leqslant n_{S}$. Hence $n_{S} = q_{1} \wedge q_{2} \wedge \cdots \wedge q_{k}$.
\end{proof}

Let $_{R}M$ be an le-module and $q$ be a submodule element of $M$ which  has a reduced primary decomposition $q = q_{1} \wedge q_{2} \wedge \cdots \wedge q_{n}$ and $P_{i} = Rad(q_{i})$, for $i=1,2,\cdots,n$. Then $P_{i}$ is called an \emph{isolated prime divisor} of $n$ if $P_{i}$ is minimal in the set of all prime divisors of $n$, i.e, $P_{i}$ does not contain properly any $P_{j}$, ($i \neq j$). If $P_{i}$ is an isolated prime divisor of $n$ then $q_{i}$ is called an \emph{isolated component} of $n$. In the next theorem we show that the isolated components of $n$ are uniquely determined by $n$.

\begin{theorem}
Let $_{R}M$ be an le-module and $n$ be a proper submodule element of $M$ which has a reduced primary decomposition $n = q_{1} \wedge q_{2} \wedge \cdots \wedge q_{m}$ and $P_{i} = Rad(q_{i})$. Then
\begin{center}
$q_{i}' = \vee \{x \in M : (n:x) \nsubseteq P_{i}\}$
\end{center}
is a submodule element of $M$ which is contained in $q_{i}$. If $q_{i}$ is an isolated primary component of $n$ then $q_{i}= q_{i}'$.
\end{theorem}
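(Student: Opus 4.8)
The plan is to work directly with the defining set $T_i = \{x \in M : (n:x) \nsubseteq P_i\}$, so that $q_i' = \vee T_i$, and first to show $q_i'$ is a submodule element by checking that $T_i$ is closed under the two operations that force $\vee T_i$ to be a submodule element. Note $0_M \in T_i$ since $(n:0_M) = R \nsubseteq P_i$, so $T_i \neq \emptyset$. For the scalar action, if $x \in T_i$ and $r \in R$, then $(n:x) \subseteq (n:rx)$ because $tx \leqslant n$ gives $t(rx) = r(tx) \leqslant rn \leqslant n$; hence $rx \in T_i$, and using (M5) I get $rq_i' = \vee_{x \in T_i} rx \leqslant q_i'$. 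For addition, if $x,y \in T_i$, choose $s,t \notin P_i$ with $sx \leqslant n$ and $ty \leqslant n$; then $st \notin P_i$ since $P_i$ is prime, and $(st)(x+y) = t(sx) + s(ty) \leqslant tn + sn \leqslant n+n = n$, so $st \in (n:x+y)$ and $x+y \in T_i$. Using property (S), $q_i' + q_i' = \vee_{x,y \in T_i}(x+y) \leqslant q_i'$. These two facts show $q_i'$ is a submodule element.

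Next I would establish $q_i' \leqslant q_i$, for which it suffices to verify $x \leqslant q_i$ for every $x \in T_i$. Given such an $x$, pick $s \in (n:x)$ with $s \notin P_i$; then $sx \leqslant n \leqslant q_i$. Since $q_i$ is $P_i$-primary and $s \notin P_i = Rad(q_i)$, the primary condition forces $x \leqslant q_i$, because the alternative $s^m e \leqslant q_i$ would put $s \in Rad(q_i) = P_i$. Taking the join over $T_i$ then gives $q_i' \leqslant q_i$.

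The crux is the reverse inequality $q_i \leqslant q_i'$ under the hypothesis that $P_i$ is isolated, and this is exactly where minimality of $P_i$ is indispensable. It suffices to prove $q_i \in T_i$, i.e.\ to produce a single $s \notin P_i$ with $sq_i \leqslant n$. Because $P_i$ is minimal among the prime divisors and the decomposition is reduced (so the $P_j$ are distinct), for each $j \neq i$ we have $P_j \nsubseteq P_i$; choose $s_j \in P_j \setminus P_i$, and since $s_j \in P_j = Rad(q_j)$ some power satisfies $s_j^{r_j} e \leqslant q_j$. Set $s = \prod_{j \neq i} s_j^{r_j}$; primeness of $P_i$ gives $s \notin P_i$. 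For each $j \neq i$, writing $s = s_j^{r_j} u$ and using $uq_i \leqslant q_i \leqslant e$ together with (M5)$^{\prime}$, I get $sq_i = s_j^{r_j}(uq_i) \leqslant s_j^{r_j} e \leqslant q_j$, while $sq_i \leqslant q_i$ since $q_i$ is a submodule element. Hence $sq_i \leqslant \wedge_{j=1}^m q_j = n$, so $s \in (n:q_i) \setminus P_i$, giving $q_i \in T_i$ and thus $q_i \leqslant \vee T_i = q_i'$. Combined with the previous step this yields $q_i = q_i'$. I expect the main difficulty to be precisely the bookkeeping in this last construction, namely arranging a single multiplier $s$ that simultaneously avoids $P_i$ and drives $q_i$ below every $q_j$ with $j \neq i$; the isolation hypothesis is exactly what makes such an $s$ available.
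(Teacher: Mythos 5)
Your proposal is correct and follows essentially the same route as the paper's own proof: the same closure argument showing $T_i$ is stable under addition (via a product $st \notin P_i$ of multipliers) and under the scalar action, the same use of the primary condition to get $q_i' \leqslant q_i$, and the same construction of the single multiplier $s = \prod_{j \neq i} s_j^{r_j} \notin P_i$ with $sq_i \leqslant n$ to conclude $q_i \in T_i$ in the isolated case. The only differences are cosmetic (your explicit note that $0_M \in T_i$ and the inclusion $(n:x) \subseteq (n:rx)$ in place of the paper's pointwise argument), so there is nothing to add.
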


\begin{proof}
Let $T = \{x \in M : (n:x) \nsubseteq P_{i}\}$. Consider $x, y \in T$ and $r \in R$. Then $(n:x) \nsubseteq P_{i}$ and $(n:y) \nsubseteq P_{i}$ and so there exist $a, b \in R$ such that $ax \leqslant n$ and $by \leqslant n$ but $a, b \notin P_{i}$. Therefore $ab(x+y) = (ab)x+(ab)y \leqslant bn+an \leqslant n+n = n$ implies that $ab \in (n:x+y)$. Since $P_{i}$ is a prime ideal and $a, b \notin P_{i}$, $ab \notin P_{i}$. Thus $(n:x+y) \nsubseteq P_{i}$ and hence $x+y \in T$. Also $arx = rax \leqslant rn \leqslant n$ implies that $a \in (n:rx)$. But $a \notin P_{i}$ and so $(n:rx) \nsubseteq P_{i}$. Thus $rx \in T$. Hence $q_{i}' + q_{i}' = \vee_{x \in T} x + \vee_{y \in T} y = \vee_{x,y \in T}(x+y) \leqslant q_{i}'$ and $rq_{i}' = r(\vee_{x \in T} x) = \vee_{x \in T}(rx) \leqslant q_{i}'$ implies that $q_{i}'$ is a submodule element of $M$. Let $x \in T$. Then $(n:x) \nsubseteq P_{i}$, and so there exists $a \in (n:x)$ such that $a \notin P_{i}$. Thus $ax \leqslant n \leqslant q_{i}$ which implies that $x \leqslant q_{i}$, since $q_{i}$ is a primary element. Hence $q_{i}' = \vee_{x \in T} x \leqslant q_{i}$.

If $q_{i}$ is an isolated primary component of $n$ then $Rad(q_{i}) = P_{i}$ is a minimal associated prime of $n$ and so $P_{j} \nsubseteq P_{i}$ for $i \neq j$. Then there exists $a_{j} \in P_{j}$ such that $a_{j} \notin P_{i}$. Thus $a_{j}^{r_{j}}e \leqslant q_{j}$ for some $r_{j} \in \mathbb{N}$. Let $a = \displaystyle\prod_{j \neq i} a_{j}^{r_{j}}$. Then $ae \leqslant \wedge_{j \neq i} q_{j}$ but $a \notin P_{i}$, since $P_{i}$ is prime ideal. Now $aq_{i} \leqslant ae \leqslant \wedge_{j \neq i} q_{j}$ and $aq_{i} \leqslant q_{i}$ implies that $aq_{i} \leqslant \wedge_{i = 1}^{m} q_{i} = n$ and hence $a \in (n:q_{i})$. Thus $(n:q_{i}) \nsubseteq P_{i}$ and so $q_{i} \in \{x \in M : (n:x) \nsubseteq P_{i}\}$. Hence $q_{i} \leqslant q_{i}'$ and it follows that $q_{i} = q_{i}'$.
\end{proof}

\begin{theorem}      \label{isolated prime divisor}
Let $_{R}M$ be an le-module and $n$ be a proper submodule element of $M$ which has a reduced primary decomposition $n = q_{1} \wedge q_{2} \wedge \cdots \wedge q_{m}$. Then $Rad(n)$ is the intersection of isolated prime divisors of $n$.
\end{theorem}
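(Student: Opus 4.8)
The plan is to reduce the statement to a purely combinatorial fact about a finite family of prime ideals. First I would apply Proposition \ref{Radical of q}(iii) to the given reduced decomposition, which yields
\[
Rad(n) = Rad(q_{1} \wedge q_{2} \wedge \cdots \wedge q_{m}) = P_{1} \cap P_{2} \cap \cdots \cap P_{m},
\]
so that $Rad(n)$ is already expressed as the intersection of \emph{all} prime divisors of $n$. The entire content of the theorem then becomes the assertion that the non-isolated (i.e.\ non-minimal) primes among the $P_{i}$ are redundant in this intersection, so that one may intersect over the isolated prime divisors alone.

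One inclusion is immediate: since the isolated prime divisors form a subfamily of $\{P_{1}, \ldots, P_{m}\}$, intersecting over that subfamily can only enlarge the result, giving $Rad(n) \subseteq \bigcap_{P_{i} \text{ isolated}} P_{i}$. For the reverse inclusion the one genuine step is the observation that every prime divisor $P_{j}$ of $n$ lies above some isolated prime divisor. This is where I expect the only real argument to sit, and it is forced by finiteness: among the $P_{i}$ satisfying $P_{i} \subseteq P_{j}$ — a nonempty finite family, as it contains $P_{j}$ itself — choose one, say $P_{i_{0}}$, that is minimal under inclusion. If $P_{i_{0}}$ were not minimal in the whole family, some $P_{\ell} \subsetneq P_{i_{0}} \subseteq P_{j}$ would contradict that choice; hence $P_{i_{0}}$ is an isolated prime divisor contained in $P_{j}$.

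With this in hand the reverse inclusion follows formally: for each $j$ we have $\bigcap_{P_{i} \text{ isolated}} P_{i} \subseteq P_{i_{0}} \subseteq P_{j}$, and intersecting over all $j$ gives $\bigcap_{P_{i} \text{ isolated}} P_{i} \subseteq P_{1} \cap \cdots \cap P_{m} = Rad(n)$. Combining the two inclusions yields the desired equality. The main (and essentially only) obstacle is the finiteness/minimality argument guaranteeing that each prime divisor sits above an isolated one; everything else is formal. This is also consistent with the earlier remark that $Rad(n) = \cap P$ over the minimal prime divisors of $(n:e)$, once one recalls from the preceding corollary that the minimal prime divisors of $n$ coincide with the isolated prime divisors.
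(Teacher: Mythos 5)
Your proof is correct and takes essentially the same route as the paper: both apply Proposition \ref{Radical of q}(iii) to write $Rad(n) = P_{1} \cap P_{2} \cap \cdots \cap P_{m}$ and then argue that the non-isolated prime divisors are redundant in this intersection. The only difference is one of rigor: you make explicit the finiteness/minimality argument showing every $P_{j}$ contains an isolated prime divisor, a step the paper compresses into the single phrase ``we can delete such $P_{j}$ from the above and we are done.''
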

\begin{proof}
We have $Rad(n) = Rad(q_{1} \wedge q_{2} \wedge \cdots \wedge q_{m}) = Rad(q_{1}) \cap Rad(q_{2}) \cap \cdots \cap Rad(q_{m}) = P_{1} \cap P_{2} \cap \cdots \cap P_{m}$, where $P_{i} = Rad(q_{i})$ are prime divisors of $n$. If some $P_{j}$ is not isolated then $P_{i} \subseteq P_{j}$ for some $P_{i}$ and hence we can delete such $P_{j}$ from the above and we are done.
\end{proof}

\begin{theorem}
Let $_{R}M$ be an le-module and $n$ be a submodule element of $M$ which has a primary decomposition. Then $Rad(n)$ is a prime ideal of $R$ if and only if $n$ has a single isolated prime divisor.
\end{theorem}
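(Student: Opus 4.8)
The plan is to fix a reduced primary decomposition $n = q_{1} \wedge q_{2} \wedge \cdots \wedge q_{m}$ of $n$ with associated prime divisors $P_{i} = Rad(q_{i})$, and to hang the entire argument on Theorem \ref{isolated prime divisor}, which already asserts that $Rad(n)$ equals the intersection of the isolated prime divisors of $n$. I would write $P_{i_{1}}, P_{i_{2}}, \ldots, P_{i_{t}}$ for the isolated prime divisors, which are pairwise distinct by condition (ii) of a reduced decomposition, so that $Rad(n) = P_{i_{1}} \cap P_{i_{2}} \cap \cdots \cap P_{i_{t}}$.

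For the implication ``single isolated prime divisor $\Rightarrow Rad(n)$ prime'', the work is essentially done for us: if $t = 1$, then Theorem \ref{isolated prime divisor} gives $Rad(n) = P_{i_{1}}$, and this is a prime ideal of $R$ because every $Rad(q_{i})$ is prime. So this direction is immediate.

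For the converse, assume $Rad(n)$ is prime. I would invoke the standard fact that a prime ideal expressed as a finite intersection of prime ideals must coincide with one of them. Concretely, from $P_{i_{1}} P_{i_{2}} \cdots P_{i_{t}} \subseteq P_{i_{1}} \cap \cdots \cap P_{i_{t}} = Rad(n)$ and the primeness of $Rad(n)$, some factor satisfies $P_{i_{j}} \subseteq Rad(n)$; combined with the reverse inclusion $Rad(n) \subseteq P_{i_{j}}$ that holds since $Rad(n)$ is the intersection, this yields $Rad(n) = P_{i_{j}}$.

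It remains to force $t = 1$, and here I would use the defining property of isolated prime divisors recalled just before Theorem \ref{isolated prime divisor}: they are exactly the minimal members of the set of prime divisors, hence pairwise incomparable. Since $Rad(n) = P_{i_{j}}$ is contained in every $P_{i_{\ell}}$, we obtain $P_{i_{j}} \subseteq P_{i_{\ell}}$ for each $\ell$, and incomparability collapses this to $P_{i_{\ell}} = P_{i_{j}}$; the distinctness of the isolated primes then leaves only $t = 1$. The one delicate point, and where I expect to spend the most care, is precisely this last step: one must be sure that ``isolated'' has been set up to mean minimal among \emph{all} prime divisors, so that the containment $P_{i_{j}} \subseteq P_{i_{\ell}}$ genuinely forces equality rather than merely being consistent with the hypotheses.
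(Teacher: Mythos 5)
Your proof is correct and takes essentially the same route as the paper: both directions rest on Theorem \ref{isolated prime divisor} together with the pairwise incomparability (minimality) of isolated prime divisors, and the paper likewise dismisses the single-divisor direction as obvious. The only difference is cosmetic: the paper's converse argues elementwise with exactly two isolated primes (choosing $a \in P_{i_1}\setminus P_{i_2}$ and $b \in P_{i_2}\setminus P_{i_1}$ and applying primeness to $ab$), declaring the general case ``similar,'' whereas your product-of-primes collapse $P_{i_1}\cdots P_{i_t} \subseteq Rad(n)$ handles any number of isolated primes uniformly.
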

\begin{proof}
Let $n = q_{1} \wedge q_{2} \wedge \cdots \wedge q_{m}$ be a primary decomposition of $n$. Assume that $Rad(n)$ is a prime ideal of $R$ and if possible suppose that $n$ has two isolated prime divisors $P_{i_{1}}$ and $P_{i_{2}}$. Then, by Theorem \ref{isolated prime divisor}, $Rad(n) = P_{i_{1}} \cap P_{i_{2}}$. Since $P_{i_{1}}$ and $P_{i_{2}}$ are isolated prime divisors of $n$ so there exist $a \in P_{i_{1}}, a \notin P_{i_{2}}$ and $b \in P_{i_{2}}, b \notin P_{i_{1}}$. Hence $ab \in P_{i_{1}} \cap P_{i_{2}} = Rad(n)$. Since $Rad(n)$ is prime ideal so $a \in Rad(n)$ or $b \in Rad(n)$ which implies that $a \in P_{i_{2}}$ or $b \in P_{i_{1}}$, a contradiction. Thus $n$ has a single isolated prime divisor. Similar is the case when $n$ has more than two isolated prime divisors.

Converse part is obvious.
\end{proof}

\begin{theorem}
Let $_{R}M$ be an le-module, $n$ be a proper submodule element of $M$ and $r \in R$. Then $n = (n:r)$ if and only if $r$ is contained in no associated prime divisor of $n$.
\end{theorem}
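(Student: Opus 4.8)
The plan is to fix a reduced primary decomposition $n = q_{1} \wedge q_{2} \wedge \cdots \wedge q_{m}$ of $n$ with $P_{i} = Rad(q_{i})$ the associated prime divisors, and to reduce the claim to the behaviour of the colon operation on each primary component. The first step is to record the meet decomposition
\begin{center}
$(n:r) = (q_{1}:r) \wedge (q_{2}:r) \wedge \cdots \wedge (q_{m}:r)$.
\end{center}
This follows by the same argument as Proposition \ref{meet}(iv): since $n \leqslant q_{i}$ we get $r(n:r) \leqslant n \leqslant q_{i}$, hence $(n:r) \leqslant (q_{i}:r)$ for every $i$ and so $(n:r) \leqslant \wedge_{i}(q_{i}:r)$; conversely $r\big(\wedge_{i}(q_{i}:r)\big) \leqslant r(q_{i}:r) \leqslant q_{i}$ for each $i$ forces $r\big(\wedge_{i}(q_{i}:r)\big) \leqslant \wedge_{i} q_{i} = n$, so $\wedge_{i}(q_{i}:r) \leqslant (n:r)$ by Proposition \ref{Ax lessequal n}.

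For the direction ``$r$ in no $P_{i}$ $\Rightarrow$ $n = (n:r)$'' I would argue as follows. If $r \notin P_{i}$ for every $i$, then each $q_{i}$ is $P_{i}$-primary with $r \notin P_{i}$, so by the theorem asserting that $(q:a) = q$ whenever $q$ is $P$-primary and $a \notin P$, we get $(q_{i}:r) = q_{i}$ for all $i$. Substituting into the meet decomposition above yields $(n:r) = q_{1} \wedge \cdots \wedge q_{m} = n$.

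The substance is the converse, which I would prove in contrapositive form: if $r \in P_{j}$ for some $j$, then $n \neq (n:r)$. Here I exploit that the decomposition is reduced. Put $x = \wedge_{i \neq j} q_{i}$; reducedness gives $x \nleqslant q_{j}$, and since $n \leqslant q_{j}$ this forces $x \nleqslant n$. Computing the ideal colon by Proposition \ref{meet}(vii), $(n:x) = \cap_{i=1}^{m}(q_{i}:x)$, where $(q_{i}:x) = R$ for $i \neq j$ (as $x \leqslant q_{i}$) and $(q_{j}:x)$ is $P_{j}$-primary, both by Theorem \ref{(q:n) is primary}; hence $(n:x)$ is $P_{j}$-primary and $Rad(n:x) = P_{j}$. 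Because $r \in P_{j} = Rad(n:x)$, there is a least $s \geqslant 1$ with $r^{s}x \leqslant n$. Setting $y = r^{s-1}x$, the relation $ry = r^{s}x \leqslant n$ gives $y \leqslant (n:r)$ by Proposition \ref{Ax lessequal n}, while minimality of $s$ (together with $x \nleqslant n$ when $s = 1$) gives $y \nleqslant n$; thus $y \leqslant (n:r)$ but $y \nleqslant n$, whence $(n:r) \neq n$.

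The main obstacle is exactly this converse: one cannot read off a witness $y$ with $ry \leqslant n$ yet $y \nleqslant n$ directly from the bare assertion that $r$ lies in some associated prime. The device that resolves it is the element $x = \wedge_{i \neq j} q_{i}$ supplied by the reduced decomposition, whose colon ideal $(n:x)$ is precisely $P_{j}$-primary, so that the radical membership $r \in P_{j}$ translates into a minimal power $r^{s}x \leqslant n$ and hence the required witness $y = r^{s-1}x$. Finally, since the set of associated primes is independent of the chosen decomposition (1st Uniqueness Theorem), the stated criterion is intrinsic to $n$.
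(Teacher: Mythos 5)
Your proof is correct, and for the substantive direction it takes a genuinely different route from the paper's. The easy direction (if $r$ lies in no $P_i$ then $n = (n:r)$) is essentially the same in both treatments: you pass through the meet decomposition $(n:r) = (q_1:r) \wedge \cdots \wedge (q_m:r)$ and invoke the paper's theorem that $(q:a) = q$ for $q$ a $P$-primary element and $a \notin P$, while the paper re-derives that fact inline from primariness of each $q_i$. The divergence is in the converse. The paper assumes $(n:r) = n$ with $r \in P_1$ and derives a contradiction with reducedness: choosing $k$ so that $r^k e \leqslant q_1$, it gets $(q_1:r^k) = e$ and, iterating the colon, $(n:r^k) = n$, whence $n = \wedge_{i \neq 1}(q_i:r^k) \geqslant \wedge_{i\neq 1} q_i \geqslant n$, so the component $q_1$ could be deleted from the reduced decomposition. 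You instead prove the contrapositive by constructing an explicit witness: with $x = \wedge_{i\neq j} q_i$, Theorem \ref{(q:n) is primary} shows $(n:x) = (q_j:x)$ is $P_j$-primary (the same device the paper uses to prove the 1st Uniqueness Theorem), so $r \in P_j = Rad(n:x)$ gives a least $s \geqslant 1$ with $r^s x \leqslant n$, and $y = r^{s-1}x$ satisfies $y \leqslant (n:r)$ yet $y \nleqslant n$. The paper's argument is shorter; yours is more constructive, exhibiting a concrete element that separates $(n:r)$ from $n$, and it makes visible exactly where reducedness enters (it supplies the element $x$ for which $(n:x)$ is primary). Your closing remark that the criterion is well posed because the set of associated primes is decomposition-independent also makes explicit a point the paper leaves tacit.
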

\begin{proof}
Let $n = q_{1} \wedge q_{2} \wedge \cdots \wedge q_{m}$ be a reduced primary decomposition of $n$ and $Rad(q_{i}) = P_{i}$. Suppose $r \notin P_{i}$ for every $i =1,2, \cdots ,m$. We have $r(n:r) \leqslant n$ and so $r(n:r) \leqslant q_{i}$ for all $i$. Since $q_{i}$ is primary and $r \notin P_{i}$, so $(n:r) \leqslant q_{i}$ for all $i$. Thus $(n:r) \leqslant n$. Also $n \leqslant (n:r)$. Hence $(n:r) = n$.

Conversely let $(n:r) = n$. If possible, without any loss of generality, assume that $r \in P_{1}$. Then $r^ke \leqslant q_{1}$ for some $k \in \mathbb{N}$ which implies that $(q_{1}:r^k) = e$. One can easily check that $((n:r):r) = (n:r^2)$ and so $(n:r) = (n:r^k)$. Thus $(n:r^k) = n$. Now $n = (n:r^k) = (q_{1} \wedge q_{2} \wedge \cdots \wedge q_{m}:r^k) = (q_{1}:r^k) \wedge (q_{2}:r^k) \wedge \cdots \wedge (q_{m}:r^k) = \wedge_{i\neq 1}(q_{i}:r^k) \geqslant \wedge_{i \neq 1}q_{i} \geqslant n$. Thus $n = \wedge_{i \neq 1}q_{i}$ which contradicts that $n = q_{1} \wedge q_{2} \wedge \cdots \wedge q_{m}$ is a reduced primary decomposition of $n$.
\end{proof}

\begin{theorem}[2nd Uniqueness Theorem]     \label{isolated set of prime divisors}
Let $_{R}M$ be an le-module and $n$ be a submodule element of $M$. If $\{P_{i_1},P_{i_2},\cdots,P_{i_k}\}$ is a set of isolated prime divisors of $n$ then $q_{i_1} \wedge q_{i_2} \wedge \cdots \wedge q_{i_k}$ depends only on this set and not on the particular reduced primary decomposition of $n$.
\end{theorem}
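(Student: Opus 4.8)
The plan is to realize the meet $q_{i_1} \wedge q_{i_2} \wedge \cdots \wedge q_{i_k}$ as an $S$-component $n_S$ for a suitable multiplicatively closed set $S$ that is built solely from the primes $P_{i_1},\dots,P_{i_k}$. Since $n_S$ is defined directly from $n$ and $S$ without any reference to the components $q_j$, independence of the decomposition will follow at once once the identification $n_S = q_{i_1}\wedge\cdots\wedge q_{i_k}$ is established via Proposition \ref{S component}.

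First I would fix a reduced primary decomposition $n = q_1 \wedge \cdots \wedge q_m$ with $P_i = Rad(q_i)$, relabelled so that $P_{i_1},\dots,P_{i_k}$ are the given isolated prime divisors. By the 1st Uniqueness Theorem the set $\{P_1,\dots,P_m\}$ of associated primes, and hence the collection of isolated (minimal) ones among them, is intrinsic to $n$ and does not depend on the chosen decomposition; in particular the set $\{P_{i_1},\dots,P_{i_k}\}$ is well defined. I then put
\[
S = R \setminus (P_{i_1} \cup P_{i_2} \cup \cdots \cup P_{i_k}).
\]
Because each $P_{i_j}$ is prime, a routine check gives $1 \in S$ and shows that $S$ is multiplicatively closed: if $a,b$ lie outside every $P_{i_j}$, then so does $ab$.

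Next I would verify the two hypotheses of Proposition \ref{S component}. By construction $P_{i_j} \cap S = \emptyset$ for each $j$. For any remaining associated prime $P_t$ (so $P_t \neq P_{i_j}$ for all $j$) I must show $P_t \cap S \neq \emptyset$, i.e.\ $P_t \nsubseteq P_{i_1} \cup \cdots \cup P_{i_k}$. By the prime avoidance lemma it is enough to show $P_t \nsubseteq P_{i_j}$ for every $j$; and indeed if $P_t \subseteq P_{i_j}$, then since $P_t \neq P_{i_j}$ we would have $P_{i_j}$ properly containing the prime divisor $P_t$, contradicting the minimality of the isolated prime divisor $P_{i_j}$. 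This containment step, resting on prime avoidance together with minimality, is the crux of the argument; the remaining verifications are routine. Note also that this dichotomy is exhaustive, so the primes avoiding $S$ are exactly $P_{i_1},\dots,P_{i_k}$, and since the decomposition is reduced these correspond to exactly the components $q_{i_1},\dots,q_{i_k}$.

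With both hypotheses in hand, Proposition \ref{S component} yields
\[
n_S = q_{i_1} \wedge q_{i_2} \wedge \cdots \wedge q_{i_k}.
\]
Finally, since $S = R \setminus \bigcup_{j} P_{i_j}$ is determined entirely by the intrinsic set $\{P_{i_1},\dots,P_{i_k}\}$, and $n_S$ is defined from $n$ and $S$ alone, the element $n_S$, and therefore $q_{i_1} \wedge \cdots \wedge q_{i_k}$, is the same for every reduced primary decomposition of $n$. I expect the only delicate point to be the prime-avoidance and minimality step establishing $P_t \cap S \neq \emptyset$; everything else is bookkeeping already supplied by the earlier results.
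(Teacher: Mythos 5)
Your proposal is correct and follows essentially the same route as the paper: define $S = R \setminus (P_{i_1} \cup \cdots \cup P_{i_k})$, use minimality of the isolated primes (via prime avoidance) to show $P_t \cap S \neq \emptyset$ for every non-isolated associated prime $P_t$, and then apply Proposition \ref{S component} to identify $q_{i_1} \wedge \cdots \wedge q_{i_k}$ with $n_S$, which depends only on $n$ and $S$. Your write-up is in fact slightly more careful than the paper's, since you explicitly name prime avoidance and note (via the 1st Uniqueness Theorem) that the set of isolated prime divisors is itself decomposition-independent.
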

\begin{proof}
Let $S = R \backslash (P_{i_1} \cup P_{i_2} \cup \cdots \cup P_{i_k})$. Then $S$ is a multiplicatively closed set in $R$. To prove the theorem it is sufficient to show $n_{S} = q_{i_1} \wedge q_{i_2} \wedge \cdots \wedge q_{i_k}$. For this we only need to show that $S \cap P_{i} = \emptyset$, for $i=i_{1},i_{2},\cdots,i_{k}$, and $S \cap P_{i} \neq \emptyset$, for $i \neq i_{1},i_{2},\cdots,i_{k}$, rest will follow from the Theorem \ref{S component}. The former is certainly true. Let $i \neq i_{1},i_{2},\cdots,i_{k}$. Since $\{P_{i_1},P_{i_2},\cdots,P_{i_k}\}$ is a set of isolated prime divisors of $n$, we have $P_{i} \nsubseteq P_{i_j}$, for $j=1,2,\cdots,k$, and hence $P_{i} \nsubseteq \cup_{j=1}^{k} P_{i_j}$. Thus $S \cap P_{i} \neq \emptyset$.
\end{proof}

The following consequence on the uniqueness of primary components corresponding to minimal prime divisors of a submodule element is immediate from Theorem \ref{S component} and Theorem \ref{isolated set of prime divisors}.
\begin{corollary}
Suppose that $n$ be a submodule element of an le-module $M$ and $P$ be a minimal prime divisor of $n$. If a $P$-primary element $q$ occurs in a reduced primary decomposition of $n$, then $q$ occurs in every reduced primary decomposition of $n$.
\end{corollary}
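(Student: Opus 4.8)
The plan is to recognize this as the one-element case of the Second Uniqueness Theorem. First I would observe that a minimal prime divisor $P$ of $n$ is in particular an \emph{isolated} prime divisor: by the corollary to the First Uniqueness Theorem, a minimal prime divisor is minimal in the set of all prime divisors of $n$, so no other associated prime of $n$ is properly contained in $P$. Thus $\{P\}$ is a (one-element) set of isolated prime divisors, and Theorem \ref{isolated set of prime divisors} applies to it with $k=1$.

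Next I would unwind what that gives. Fix any reduced primary decomposition $n = q_{1} \wedge q_{2} \wedge \cdots \wedge q_{m}$ with $P_{j} = Rad(q_{j})$, and suppose $P = P_{t}$, so that $q_{t}$ is a $P$-primary component. Because the decomposition is reduced, the radicals $P_{1}, \ldots, P_{m}$ are pairwise distinct, so $q_{t}$ is the \emph{unique} $P$-primary term. Put $S = R \setminus P$, which is multiplicatively closed since $P$ is prime. The key point, and the only place I would be careful, is to check that $S$ separates exactly the component $q_{t}$: for each $j$ one has $P_{j} \cap S = \emptyset$ if and only if $P_{j} \subseteq P$, and minimality of $P$ forces $P_{j} \subseteq P \Rightarrow P_{j} = P$, hence $P_{j} = P_{t}$. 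Therefore $P_{j} \cap S = \emptyset$ precisely for $j = t$, and $P_{j} \cap S \neq \emptyset$ for all $j \neq t$.

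With this verified, Proposition \ref{S component} yields $n_{S} = q_{t} = q$. But $n_{S}$ is defined purely in terms of $n$ and the set $S = R \setminus P$, and $S$ depends only on the prime ideal $P$, not on any choice of decomposition. Consequently $n_{S}$ is the same submodule element whichever reduced primary decomposition we started from. Hence in an arbitrary reduced primary decomposition of $n$ the $P$-primary component must again equal $n_{S} = q$, so $q$ occurs in every reduced primary decomposition of $n$.

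The only genuine obstacle is the verification flagged above: that minimality of $P$ (rather than $P$ being merely some associated prime) is exactly what guarantees that $R \setminus P$ meets every $P_{j}$ other than $P_{t}$, so that Proposition \ref{S component} isolates a single component. Once this is in place the result is immediate, being literally the $k=1$ instance of Theorem \ref{isolated set of prime divisors}.
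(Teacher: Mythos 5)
Your proposal is correct and follows exactly the route the paper intends: the paper derives this corollary as an immediate consequence of Proposition \ref{S component} and Theorem \ref{isolated set of prime divisors}, i.e., the $k=1$ case with $S = R \setminus P$, which is precisely your argument (including the key check that minimality of $P$ makes $S$ meet every other associated prime, so that $n_S$ equals the $P$-primary component and is decomposition-independent).
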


\bibliographystyle{amsplain}

\end{document}